\newcommand{\R}{\mathbb R}
\newcommand{\eps}{\varepsilon}
\newtheorem{teo}{Theorem}
\newtheorem{lemma}[teo]{Lemma}
\newtheorem{prop}[teo]{Proposition}
\newtheorem{defin}{Definition}
\theoremstyle{remark}
\newtheorem{rem}[teo]{Remark}
\DeclareMathOperator{\Id}{Id}
\DeclareMathOperator{\cat}{cat}
\begin{document}

\title{Positive solutions of singularly perturbed nonlinear elliptic problem
on Riemannian manifolds with boundary}
\author{M.Ghimenti
\thanks{Dipartimento di Matematica Applicata, Universit\`a di Pisa, via Buonarroti
1c, 56127, Pisa, Italy, e-mail \texttt{ghimenti@mail.dm.unipi.it,
a.micheletti@dma.it} }, A.M.Micheletti\addtocounter{footnote}{-1}
\footnotemark }
\date{}
\maketitle

\begin{abstract}
Let $(M,g)$ be a smooth connected compact Riemannian manifold of finite
dimension $n\geq 2$\ with a smooth boundary $\partial M$. We consider the
problem
\begin{equation*}
-\varepsilon ^{2}\Delta _{g}u+u=|u|^{p-2}u,\ u>0\text{ on }M,\
\frac{\partial u}{\partial \nu }=0\text{ on }\partial M
\end{equation*}
where $\nu $ is an exterior normal to $\partial M$.

The number of solutions of this problem depends on the topological
properties of the manifold. In particular we consider the Lusternik
Schnirelmann category of the boundary.

AMS subject: 58G03, 58E30
\end{abstract}

\section{Introduction}

Let $(M,g)$ be a smooth connected compact Riemannian manifold of finite
dimension $n\geq 2$\ with a smooth boundary $\partial M$, that is $\partial
M $ is  the union of a finite number of connected, smooth, boundaryless,
submanifold of $M$ of dimension $n-1$. Here $g$ denotes the Riemannian
metric tensor. By Nash theorem we can consider $(M,g)$ embedded as a regular
submanifold embedded in $\mathbb{R}^{N}$. We are interested in finding 
solutions $u\in H_{g}^{1}(M)$ of the following singularly perturbed 
nonlinear elliptic problem

\begin{equation}
\left\{
\begin{array}{cl}
-\varepsilon ^{2}\Delta _{g}u+u=|u|^{p-2}u,\text{ \ }u>0 & \text{ on }M \\
\frac{\partial u}{\partial \nu }=0 & \text{on }\partial M
\end{array}
\right.  \tag{$P$}  \label{P}
\end{equation}
for $2<p<2^{\ast }=\frac{2N}{N-2}$, where $\nu $ is the external normal to
$\partial M$.

Here $\displaystyle H_{g}^{1}(M)=\left\{ u:M\rightarrow \mathbb{R}:
\int_{M}|\nabla _{g}u|^{2}+u^{2}d\mu _{g}<\infty \right\} $ where $\mu _{g}$
denotes the volume form on $M$ associated to $g$.

Above type of equations have been extensively studied when $M$ is 
a flat bounded domain
$\Omega\subset \mathbb{R}^{N}$. We recall some classical result about the
Neumann problem in $\Omega$.
In \cite{LNT,NT1,NT2}, Lin, Ni and Takagi established the existence of
least-energy solution to (\ref{P}) and showed that for $\eps$
small enough the least energy
solution has a boundary spike.
Later, in \cite{DFW,W1} it was proved that for any stable
critical point of the mean curvature of the boundary it is
possible to construct single
boundary spike layer solutions, while in \cite{G,Li,WW}
the authors construct multiple boundary
spike solutions. Finally, in \cite{DY,GWW} the authors
proved that for any integer $K$ there exists a boundary
$K$-peaks solutions.

For which concerns the probem (\ref{P}) on a manifold $M$, with boundary
and without boundary,
Byeon and Park \cite{BP05} showed that the mountain pass solution
$u_\varepsilon$ has a spike layer.

A lot of works are devoted to show the influence of the topology of
$\Omega$ on the number of solutions of the Dirichlet problem
\begin{equation}
\left\{
\begin{array}{cl}
-\varepsilon ^{2}\Delta _{g}u+u=|u|^{p-2}u, u>0&\text{ on }\Omega\subset\R^N; \\
u=0 &\text{ on }\partial\Omega,
\end{array}
\right.
\end{equation}
when $\Omega$ is  a flat subset of $\R^N$.
We limit to cite \cite{BaCo88,BaL90,BaLi97,BC91,BCP91,BP05,BW02}.

Recently there have been some results on the effect of the topology
of the manifold $M$ on the number of solutions of the equation
$-\varepsilon ^{2}\Delta _{g}u+u=|u|^{p-2}u$ on a manifold
$M$ without boundary.
In \cite{BBM07} the authors proved that, if $M$ has a rich topology,
the equation has
multiple solutions. More precisely they show that this equation has at least
$\cat(M)+1$ positive nontrivial solutions for $\eps$ small enough.
Here $\cat(M)$ is the Lusternik-Schnirelmann category of $M$.
In \cite{Vta} there is the same result for a more general nonlinearity. Furthermore in \cite{Hta}
it was shown that the number of solution is influenced by the topology of a suitable
subset of $M$ depending on the geometry of $M$.

Our result concerns problem (\ref{P}) on \ a manifold $M$ with $\partial
M\neq \emptyset $. In this case we show that 
the topology of the boundary $\partial M$
influences the number of solutions, as follows.

\begin{teo}
For $\varepsilon $ small enough the problem (\ref{P}) has at least
$\cat(\partial M)+1$ non constant distinct solutions.
\end{teo}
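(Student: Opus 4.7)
The plan is to adapt the Benci--Cerami photography method, as carried out in \cite{BBM07} for manifolds without boundary, replacing the ``interior'' limit problem on $\mathbb{R}^N$ by the ``boundary'' limit problem on the half-space $\mathbb{R}^N_+$. Solutions of (\ref{P}) are critical points of
\begin{equation*}
J_\varepsilon(u)=\frac{1}{2}\int_M\bigl(\varepsilon^2|\nabla_g u|^2+u^2\bigr)d\mu_g-\frac{1}{p}\int_M|u|^p\,d\mu_g,
\end{equation*}
and I would restrict to the Nehari manifold $\mathcal{N}_\varepsilon$ where the constrained functional is bounded below, satisfies Palais--Smale, and constants $u\equiv 1$ are isolated (for $\varepsilon$ small) so they can be excluded by energy and/or profile considerations at the end. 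The relevant limit problem is $-\Delta U+U=U^{p-1}$, $U>0$ on $\mathbb{R}^N_+$, $\partial_\nu U=0$ on $\partial\mathbb{R}^N_+$, whose positive least-energy solution $U$ (the half-space bubble) carries energy $m_\infty$; concentration on a boundary point produces solutions of energy $\approx\tfrac12 m_\infty\,\varepsilon^N$ (half that of the interior bubble), which is what makes the boundary, rather than the interior, the natural ``location parameter.''

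\textbf{The two maps.} I would construct $\Phi_\varepsilon\colon\partial M\to\mathcal{N}_\varepsilon$ by taking, for $\xi\in\partial M$, a boundary normal chart at $\xi$, transplanting the half-space bubble $U(\cdot/\varepsilon)$ via this chart, and multiplying by a cutoff supported in a geodesic half-ball of radius smaller than the injectivity radius; then project onto $\mathcal{N}_\varepsilon$ by the standard rescaling. A careful expansion in $\varepsilon$ (using the local flatness of the metric and the half-space profile) should give
\begin{equation*}
J_\varepsilon(\Phi_\varepsilon(\xi))=\tfrac12 m_\infty\,\varepsilon^N+o(\varepsilon^N),\quad\text{uniformly in }\xi\in\partial M.
\end{equation*}
Conversely, I would define a barycenter $\beta_\varepsilon\colon\mathcal{N}_\varepsilon^{c(\varepsilon)}\to\mathbb{R}^N$ by
\begin{equation*}
\beta_\varepsilon(u)=\frac{\int_M x\,|u|^p\,d\mu_g}{\int_M |u|^p\,d\mu_g}
\end{equation*}
(using the Nash embedding $M\subset\mathbb{R}^N$), for an appropriate sublevel $c(\varepsilon)=\tfrac12 m_\infty\varepsilon^N+o(\varepsilon^N)$. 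The key concentration lemma, and the main technical obstacle, is to show that if $u\in\mathcal{N}_\varepsilon$ has $J_\varepsilon(u)\le c(\varepsilon)$ then $u$ is close, after rescaling, to a single half-space bubble centered near $\partial M$; consequently $\beta_\varepsilon(u)$ lies in a small tubular neighborhood of $\partial M$ in $\mathbb{R}^N$, on which the nearest-point projection $\pi\colon(\partial M)_\delta\to\partial M$ is a well-defined retraction.

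\textbf{Category argument and the extra solution.} Once the concentration lemma is established, a standard profile decomposition shows that $\pi\circ\beta_\varepsilon\circ\Phi_\varepsilon$ is homotopic to $\mathrm{id}_{\partial M}$ (I would explicitly build the homotopy by shrinking the support of $\Phi_\varepsilon(\xi)$ to $\xi$, as in \cite{BBM07}). By the classical topological lemma relating category and such a factorization,
\begin{equation*}
\cat\bigl(\mathcal{N}_\varepsilon^{c(\varepsilon)}\bigr)\ge\cat(\partial M),
\end{equation*}
yielding $\cat(\partial M)$ critical points of $J_\varepsilon|_{\mathcal{N}_\varepsilon}$ at energy $\le c(\varepsilon)$. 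The extra $+1$ solution I would obtain by the usual argument: if these low-energy critical points did not already exhaust the count, pick one; otherwise the sublevel $\mathcal{N}_\varepsilon^{c(\varepsilon)}$ is not a deformation retract of $\mathcal{N}_\varepsilon$ (since $\partial M$ is not contractible to a point in reasonable cases and, more generally, the topology changes), so a higher critical level exists by standard min-max. Finally, the non-constancy is free: the constant solution $u\equiv 1$ has energy of order $1$ (in the unscaled functional it is $(\tfrac12-\tfrac1p)|M|_g$), which is much larger than $c(\varepsilon)=O(\varepsilon^N)$, so all the solutions produced above are distinct from it. The hard technical step, as noted, will be the concentration/profile lemma for low-energy functions on $\mathcal{N}_\varepsilon$, which requires showing that the interior-bubble energy $m_\infty$ is strictly larger than the boundary-bubble energy $\tfrac12 m_\infty$ so that only boundary concentration is compatible with $J_\varepsilon\le c(\varepsilon)$.
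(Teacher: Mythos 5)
Your overall strategy coincides with the paper's: the photography method with the half-space bubble transplanted via boundary (Fermi) coordinates, projection onto the Nehari manifold, a barycenter map through the Nash embedding, a concentration lemma for low-energy Nehari functions whose proof hinges on the energy gap $m_e=2m_e^+$ between the full-space and half-space ground states, and the homotopy $\pi\circ\beta\circ\Phi_\varepsilon\simeq\mathrm{id}_{\partial M}$ giving $\cat(\partial M)$ solutions in the low sublevel. Up to the normalization (the paper rescales $J_\varepsilon$ by $\varepsilon^{-n}$ so the relevant level is $m_e^+$ rather than $O(\varepsilon^N)$), this part of your plan is the paper's proof.

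The gap is in the $(+1)$-st solution and its non-constancy. Saying that ``the sublevel is not a deformation retract of $\mathcal{N}_\varepsilon$, so a higher critical level exists by standard min-max'' leaves two things unjustified. First, the existence mechanism: the paper makes this precise by constructing an explicit compact set $T_\varepsilon$ --- the projection onto $\mathcal{N}_\varepsilon$ of the cone $\{\theta v_\varepsilon+(1-\theta)Z_{\varepsilon,q}\}$ over $\Phi_\varepsilon(\partial M)$ with vertex a fixed positive boundary-concentrated function --- which contains $\Phi_\varepsilon(\partial M)$, is contractible in $\mathcal{N}_\varepsilon\cap J_\varepsilon^{c_\varepsilon}$, and then invokes the second part of the Lusternik--Schnirelmann theorem using $\cat(\Phi_\varepsilon(\partial M))\geq\cat(\partial M)\geq 2$ (automatic, since $\partial M$ is a closed manifold; your hedge ``in reasonable cases'' is unnecessary). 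Second, and more importantly, this construction yields the uniform bound $c_\varepsilon\leq c$ (i.e.\ $O(\varepsilon^N)$ in your normalization), which requires checking that $\inf_{u\in C_\varepsilon}\varepsilon^{-n}|u|^p_{p,g}$ stays bounded away from $0$ along the whole cone. Without such an upper bound on the extra critical level, your non-constancy argument --- which compares only against the \emph{low} sublevel $c(\varepsilon)=O(\varepsilon^N)$ --- does not exclude that the extra min-max critical point is the constant $u\equiv 1$, whose unscaled energy is of order $1$. An abstract ``the topology changes'' min-max produces a level above $m_e^++\delta$ but gives no control from above, so this step needs the explicit contractible set (or an equivalent quantitative bound) to close the argument.
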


The paper is organized as follows. In Section 2 we introduce some notions
and notations. In Section 3 we sketch the proof of the main result. The
details of the proof are in sections 4-7.

\section{Preliminaries}

We consider the $C^{2}$ functional defined on $H_{g}^{1}(M)$
\begin{equation}
J_{\varepsilon }(u)=\frac{1}{\varepsilon ^{N}}\int_{M}\left( \frac{1}{2}
\varepsilon ^{2}|\nabla _{g}u|^{2}+\frac{1}{2}|u|^{2}-\frac{1}{p}
|u^{+}|^{p}\right) d\mu _{g}.
\end{equation}
where $u^{+}(x)=\max \left\{ u(x),0\right\} $. It is well known that the
critical points of $J_{\varepsilon }(u)$ constrained on the associated
$C^{2} $ Nehari manifold
\begin{equation}
\mathcal{N}_{\varepsilon }=\left\{ u\in H_{g}^{1}\smallsetminus \{0\}\ :\
J_{\varepsilon }^{\prime }(u)u=0\right\}
\end{equation}
are non trivial solution of problem (\ref{P}).

Let $\mathbb{R}_{+}^{n}= \left\{ x=(\bar{x},x_{n}):\bar{x}\in
\mathbb{R}^{n-1},x_{n}\geq 0\right\}$. It is known that there exists a least energy
solution $V\in H^{1}(\mathbb{R}_{+}^{n})$ of the equation
\begin{equation}
\left\{
\begin{array}{cl}
-\Delta V+V=|V|^{p-2}V,\text{ }V>0 & \text{ on }\mathbb{R}_{+}^{n} \\
\frac{\partial V}{\partial x_{n}}|_{(\bar{x},0)}=0. &
\end{array}
\right.
\end{equation}
Moreover $V$ is radially symmetric and $\left\vert D^{\alpha}V(x)\right\vert
\leq c\exp (-\mu \left\vert x\right\vert )$ with $\left\vert \alpha
\right\vert \leq 2$, and $c,\mu $ positive constants.

If $V$ is a solution, also $V(x+y)$ with $y=(\bar{y},0)$ is a solution,
$V_{\varepsilon }(x)=V\left( \frac{x}{\varepsilon }\right) $ is a solution of
\begin{equation}
\left\{
\begin{array}{cl}
-\varepsilon ^{2}\Delta V_{\varepsilon }+V_{\varepsilon }=
|V_{\varepsilon}|^{p-2}V_{\varepsilon} & \text{ on }\mathbb{R}_{+}^{n} \\
\frac{\partial V_{\varepsilon }}{\partial x_{n}}|_{(\bar{x},0)}=0. &
\end{array}
\right.
\end{equation}

We put
\begin{equation}
m_{e}^{+}=\inf \left\{ E^{+}(v):v\in \mathcal{N}(E^{+})\right\} \text{ and }
m_{e}=\inf \left\{ E(v):v\in \mathcal{N}(E)\right\} ,
\end{equation}
where
\begin{eqnarray*}
E^{+}(v) &=&\int_{\mathbb{R}_{+}^{n}}\frac{1}{2}|\nabla v|^{2}+\frac{1}{2}
|v|^{2}-\frac{1}{p}|v^{+}|^{p}dx; \\
E(v) &=&\int_{\mathbb{R}^{n}}\frac{1}{2}|\nabla v|^{2}+\frac{1}{2}|v|^{2}-
\frac{1}{p}|v^{+}|^{p}dx.
\end{eqnarray*}
and
\begin{eqnarray*}
\mathcal{N}(E^{+}) &=&\left\{ v\in H^{1}(\mathbb{R}_{+}^{n})\smallsetminus
\{0\}\ :\ E^{+}(v)v=0\right\} ; \\
\mathcal{N}(E) &=&\left\{ v\in H^{1}(\mathbb{R}^{n})\smallsetminus \{0\}\ :\
E(v)v=0\right\} .
\end{eqnarray*}
It holds
\begin{equation*}
m_{e}=2m_{e}^{+},
\end{equation*}
and
\begin{equation*}
m_{e}^{+}=E^{+}(V)=\left( \frac{1}{2}-\frac{1}{p}\right) \left(
S_{e}^{+}\right) ^{\frac{p}{p-2}}\text{ where }S_{e}^{+}=\inf \left\{
\frac{\left\vert \left\vert v\right\vert \right\vert_{H^{1}
(\mathbb{R}_{+}^{n})}^{2}}{\left\vert \left\vert v\right\vert \right\vert _{L^{p}
(\mathbb{R}_{+}^{n})}^{2}},v\neq 0\right\} .
\end{equation*}

\begin{rem}
\label{remR} On the tangent bundle of any compact Riemannian manifold
$\mathcal{M}$ it is defined the exponential map $\exp :T\mathcal{M}
\rightarrow \mathcal{M}$ which is of class $C^{\infty }$. Moreover there
exists a constant $R>0$ and a finite number of $x_{i}\in \mathcal{M}$ such
that $\mathcal{M}=\cup _{i=1}^{l}B_{g}(x_{i},R)$ and $\exp
_{x_{i}}:B(0,R)\rightarrow B_{g}(x_{i},R)$ is a diffeormophism for all $i$.
\end{rem}

By choosing an orthogonal coordinate system $(y_{1},\dots ,y_{n})$ of
$\mathbb{R}^{n}$ and identifying $T_{x_{0}}\mathcal{M}$ with $\mathbb{R}^{n}$
for $x_{0}\in \mathcal{M}$ we can define by the exponential map the so
called normal coordinates. For $x_{0}\in \mathcal{M},$ $g_{x_{0}}$ denotes
the metric read through the normal coordinates. In particular, we have
$g_{x_{0}}(0)=\Id$. We set $\left\vert g_{x_{0}}(y)\right\vert =\det \left(
g_{x_{0}}(y)\right) _{ij}$ and $g_{x_{0}}^{ij}(y)=\left( \left(
g_{x_{0}}(y)\right) _{ij}\right) ^{-1}$.

\begin{rem}
\label{remrho} If $q$ belongs to the boundary $\partial M$, let $\bar{y}
=\left( y_{1},\dots ,y_{n-1}\right) $ be Riemannian normal coordinates on
the $n-1$ manifold $\partial M$ at the point $q$. For a point $\xi \in M$
close to $q$, there exists a unique $\bar{\xi}\in \partial M$ such that
$d_{g}(\xi ,\partial M)=d_{g}(\xi ,\bar{\xi})$. We set $\bar{y}(\xi )\in
\mathbb{R}^{n-1}$ the normal coordinates for $\bar{\xi}$ and $y_{n}(\xi
)=d_{g}(\xi ,\partial M)$. Then we define a chart $\psi _{q}^{\partial }:
\mathbb{R}_{+}^{n}\rightarrow M$ such that $\left( \bar{y}(\xi ),y_{n}(\xi
)\right) =\left( \psi _{q}^{\partial }\right) ^{-1}(\xi )$. These
coordinates are called \emph{Fermi coordinates} at $q\in \partial M$. The
Riemannian metric $g_{q}\left( \bar{y},y_{n}\right) $ read through the Fermi
coordinates satisfies $g_{q}(0) =\Id$.
\end{rem}

In the following we choose $\rho >0$ such that in the subset $\left(
\partial M\right) _{\rho }:=\left\{ x\in M\,:\,d_{g}(x,\partial M)<\rho
\right\} $ the Fermi coordinates are well defined. Moreover we choose $\rho $
small enough such that $3\rho $ is smaller than the radius 
$\rho (\partial M)$ of
topological invariance of $\partial M$, defined below.

\begin{defin}
The radius of topological invariance $\rho ({\cal M})$ of 
${\cal M}\subset \mathbb{R}^{N}$
is
\begin{equation*}
\rho ({\cal M}):=\sup \left\{ \rho >0\,:\,
\cat\left( ({\cal M})_{\rho }\right) =
\cat({\cal M}\right) \}
\end{equation*}
where
\begin{equation*}
\left({\cal M}\right)_{\rho }:=
\left\{ x\in \mathbb{R}^{N}\,:\,d(x,{\cal M})<\rho \right\}
\end{equation*}
\end{defin}

Fixed $\rho $, using Remark \ref{remR}, we can choose $R_{M}$ such that
$\cup _{i=1}^{l}B_{g}(x_{i},R_{M})$ covers $M\smallsetminus \left( \partial
M\right) _{\rho }$, and $R_{M}<\rho $. We note by $d_{g}^{\partial }$ and
$\exp ^{\partial }$ respectively the geodesic distance and the exponential
map on by $\partial M$. By compactness of $\partial M$, there is an
$R^{\partial }$ and a finite number of points $q_{i}\in \partial M$,
$i=1,\dots ,k$ such that
\begin{equation*}
I_{q_{i}}(R^{\partial },\rho ):=\left\{ x\in M,\,d_{g}(x,\partial M)=
d_{g}(x,\bar{\xi})<\rho ,\,d_{g}^{\partial }(q_{i},\bar{\xi})<R^{\partial }\right\}
\end{equation*}
form a covering of $\left( \partial M\right) _{\rho }$ and on every
$I_{q_{i}}$ the fermi coordinates are well defined. In the following we can
choose without loss of generality, $R=\min \left\{ R^{\partial
},R_{M}\right\} <\rho $.

\section{Main tools for the proof}

Using the notation of the previous section we can state our main result more
precisely.

\begin{teo}
\label{mainteo} There exists $\delta_0\in(0,m_e^+)$ and $\varepsilon_0>0$
such that, for $\delta\in(0,\delta_0)$ and $\varepsilon\in(0,\varepsilon_0)$
the functional $J_\varepsilon$ has at least $\cat(\partial M)$ critical
points $u\in\mathcal{N}_\varepsilon\subset H^1_g(M)$ satisfying
$J_\varepsilon(u)< m_e^++\delta$ and at least a critical point with $m_e^{+}
+ \delta\leq J_\varepsilon(u)\leq c$.
\end{teo}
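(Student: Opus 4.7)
The strategy is the photography/barycenter method of Benci–Cerami–Passaseo, adapted so that the concentration profile is the half-space solution $V$ and the base space is the boundary $\partial M$. The core idea is to exhibit a pair of continuous maps
\[
\Phi_\varepsilon : \partial M \longrightarrow J_\varepsilon^{m_e^+ + \delta}\cap \mathcal{N}_\varepsilon,
\qquad
\beta_\varepsilon : J_\varepsilon^{m_e^+ + \delta}\cap \mathcal{N}_\varepsilon \longrightarrow (\partial M)_\rho\subset\mathbb{R}^N,
\]
whose composition is homotopic to the inclusion $\partial M\hookrightarrow (\partial M)_\rho$. Since $\rho$ has been chosen smaller than the radius of topological invariance of $\partial M$, the composition is a homotopy equivalence at the level of category, and the abstract Lusternik–Schnirelmann theorem on the Nehari manifold will then deliver at least $\cat(\partial M)$ critical points in the sublevel $J_\varepsilon^{m_e^+ + \delta}$. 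The extra critical point above the level $m_e^+ + \delta$ will come from a min–max argument using the fact that $\Phi_\varepsilon(\partial M)$ is contractible in a slightly higher sublevel set.

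I would first build $\Phi_\varepsilon$. Given $q\in\partial M$, using the Fermi coordinates $\psi_q^\partial$ of Remark \ref{remrho} and a smooth cutoff $\chi$ supported in a ball of radius $\rho$, I set
\[
W_{\varepsilon,q}(\xi) = \chi\bigl(|(\psi_q^\partial)^{-1}(\xi)|\bigr)\, V\!\left(\frac{(\psi_q^\partial)^{-1}(\xi)}{\varepsilon}\right)
\]
extended by zero, and define $\Phi_\varepsilon(q) = t_\varepsilon(q)\, W_{\varepsilon,q}$, where $t_\varepsilon(q)$ is the unique positive number such that $\Phi_\varepsilon(q)\in\mathcal{N}_\varepsilon$. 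By the exponential decay of $V$, the expansion of $g_q$ in Fermi coordinates, and a change of variables that sends the support into $\mathbb{R}_+^n$, one checks that $J_\varepsilon(\Phi_\varepsilon(q)) \to E^+(V) = m_e^+$ uniformly in $q\in\partial M$ as $\varepsilon\to 0$. Fixing $\delta\in(0,m_e^+)$ and choosing $\varepsilon_0$ small, the map takes values in the required sublevel.

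Next I construct $\beta_\varepsilon$. Viewing $M\subset\mathbb{R}^N$ via the Nash embedding, for $u\in\mathcal{N}_\varepsilon$ set
\[
\beta_\varepsilon(u) = \frac{\displaystyle\int_M x\,|u^+(x)|^p\, d\mu_g}{\displaystyle\int_M |u^+(x)|^p\, d\mu_g}\in\mathbb{R}^N.
\]
The key technical step, and in my view the main obstacle, is the concentration lemma: if $u_\varepsilon\in\mathcal{N}_\varepsilon$ satisfies $J_\varepsilon(u_\varepsilon)\le m_e^+ + \delta$ with $\delta$ small, then $u_\varepsilon$ must concentrate $L^p$-mass near a single boundary point. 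One proves this by contradiction: rescaling around a point of maximal mass and passing to a limit in $H^1(\mathbb{R}^n_+)$ or $H^1(\mathbb{R}^n)$, where the latter would give energy $\ge m_e = 2m_e^+$, contradicting $\delta<m_e^+$. This argument forces the limit to be a half-space solution centered near $\partial M$, and hence $\beta_\varepsilon(u)\in(\partial M)_\rho$ for $\varepsilon$ small. Once this is established, a direct change of variables shows $\beta_\varepsilon(\Phi_\varepsilon(q))\to q$ uniformly, providing the homotopy $\beta_\varepsilon\circ\Phi_\varepsilon\simeq \Id_{\partial M}$ inside $(\partial M)_\rho$.

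Now apply Lusternik–Schnirelmann theory. Since the sublevel set is trapped between two spaces of the same category, $\cat(J_\varepsilon^{m_e^++\delta}\cap\mathcal{N}_\varepsilon)\ge \cat(\partial M)$. Standard deformation arguments on the $C^2$ Nehari manifold, together with a Palais–Smale type condition below the threshold $m_e$ (again derived from the concentration analysis), yield at least $\cat(\partial M)$ critical points of $J_\varepsilon$ in this sublevel. For the additional critical point, I would define the min–max level
\[
c = \max_{q\in\partial M,\, s\in[0,1]} J_\varepsilon\bigl(H(q,s)\bigr),
\]
where $H$ is a contraction of $\Phi_\varepsilon(\partial M)$ to a point inside $\mathcal{N}_\varepsilon$ (which exists because $\mathcal{N}_\varepsilon$ is contractible). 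If no critical value were to lie in $(m_e^+ + \delta, c]$, a standard deformation argument would push $H$ into $J_\varepsilon^{m_e^+ + \delta}\cap\mathcal{N}_\varepsilon$, contradicting the non-contractibility statement encoded in $\beta_\varepsilon\circ\Phi_\varepsilon\simeq \Id_{\partial M}$. The values $c$ and the compactness up to this level will follow from the same concentration analysis that underlies $\beta_\varepsilon$, so the really delicate step throughout is the uniform control of concentrating sequences, which is where the geometry of the boundary enters.
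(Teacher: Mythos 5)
Your construction of $\Phi_\varepsilon$, the barycenter map $\beta$, the concentration dichotomy (interior blow-up limit forcing energy $\ge m_e=2m_e^+$ versus boundary limit), and the category argument all match the paper's strategy; the paper implements the "point of maximal mass" step via an $\varepsilon$-scale partition of $M$ plus a Sobolev estimate (Lemma \ref{lemmagamma}) and Ekeland's principle, but your sketch captures the same dichotomy and is acceptable at this level of detail.

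The genuine gap is in the last step, the critical point above $m_e^++\delta$. You take an \emph{arbitrary} contraction $H$ of $\Phi_\varepsilon(\partial M)$ inside $\mathcal{N}_\varepsilon$ (asserting, without proof, that $\mathcal{N}_\varepsilon$ is contractible) and set $c=\max J_\varepsilon\circ H$. Nothing in your argument bounds this $c$ uniformly in $\varepsilon$, yet the statement requires $J_\varepsilon(u)\le c$ for a \emph{fixed} constant $c$ (and, for the main theorem, a bounded level is exactly what rules out the constant critical point $\bar v\equiv 1$, whose energy is of order $\varepsilon^{-n}$). Your remark that boundedness "will follow from the same concentration analysis" does not work: that analysis only applies to functions with energy near $m_e^+$, not to the whole image of a contraction. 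The danger is concrete: along a generic path in $\mathcal{N}_\varepsilon$ the functions $\theta u_0+(1-\theta)u_1$ can have nearly cancelling positive parts, so the Nehari projection factor $t_\varepsilon(u)=\bigl(|||u|||_\varepsilon^2/|u^+|_{p,\varepsilon}^p\bigr)^{1/(p-2)}$ and hence $J_\varepsilon(t_\varepsilon(u)u)$ can be arbitrarily large. The paper avoids this by building an explicit contractible set $T_\varepsilon$: the cone of convex combinations $\theta v_\varepsilon+(1-\theta)Z_{\varepsilon,q}$ of \emph{positive} bump functions, projected onto $\mathcal{N}_\varepsilon$. Positivity gives $\frac{1}{\varepsilon^n}|u|_{p,g}^p\ge \frac{1}{\varepsilon^n}\max\{|\theta v_\varepsilon|_{p,g}^p,|(1-\theta)Z_{\varepsilon,q}|_{p,g}^p\}\ge k_4>0$ uniformly, while $|||u|||_\varepsilon^2$ is uniformly bounded above, so $c_\varepsilon\le c$ for all small $\varepsilon$. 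You need to replace your abstract contraction with such an explicit one (or otherwise control the energy along it) before the min--max/deformation argument yields the stated conclusion.
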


We recall the definition of Lusternik Schnirelmann category.

\begin{defin}
Let $M$ a topological space and consider a closed subset $A\subset M$. We
say that $A$ has category $k$ relative to $M$ ($\cat_M A=k$) if $A$ is
covered by $k$ closed sets $A_j$, $j=1,\dots,k$, which are contractible in
$M $, and $k$ is the minimum integer with this property.
\end{defin}

\begin{rem}
Let $M_1$ and $M_2$ be topological spaces. If $g_1:M_1\rightarrow M_2$ and
$g_2:M_2\rightarrow M_1$ are continuous operators such that $g_2\circ g_1$ is
homotopic to the identity on $M_1$, then $\cat M_1\leq \cat M_2$.
For the proof see \cite{BC91}.
\end{rem}

We recall the following classical result (see for example \cite{BCP91}).

\begin{teo}
\label{teocat}Let $J$ be a $C^{1,1}$ real functional on a complete $C^{1,1}$
manifold $\mathcal{N}$. If $J$ is bounded from below and satisfies the
{\em Palais Smale}
condition then has at least $\cat(J^{d})$ critical point in $J^{d}$ where
$J^{d}=\{u\in \mathcal{N}\ :\ J(u)<d\}$. Moreover if $\mathcal{N}$ is
contractible and $\cat J^{d}>1$, there exists at least one critical point
$u\not\in J^{d}$
\end{teo}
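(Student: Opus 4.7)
The plan is to implement the Benci--Cerami photography method adapted to the half-space ground state $V$ and to the boundary $\partial M$, so that the Lusternik--Schnirelmann theorem recalled in Theorem \ref{teocat} becomes applicable on the Nehari manifold $\mathcal{N}_\varepsilon$.

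First I would verify that $J_\varepsilon$ restricted to $\mathcal{N}_\varepsilon$ is bounded from below, is of class $C^{1,1}$, and satisfies the Palais--Smale condition; this is standard for subcritical problems since $2<p<2^\ast$, and gives access to the abstract theorem. The core of the argument is to build two continuous maps
\[
\Phi_\varepsilon : \partial M \longrightarrow \mathcal{N}_\varepsilon \cap J_\varepsilon^{m_e^++\delta}, \qquad \beta : \mathcal{N}_\varepsilon \cap J_\varepsilon^{m_e^++\delta} \longrightarrow (\partial M)_\rho,
\]
and to prove that $\beta\circ\Phi_\varepsilon$ is homotopic, inside $(\partial M)_\rho$, to the inclusion $i:\partial M\hookrightarrow (\partial M)_\rho$. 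Once this is done, the homotopy remark together with the fact that $3\rho<\rho(\partial M)$, so that $\cat((\partial M)_\rho)=\cat(\partial M)$, yields $\cat(\partial M)\le \cat(J_\varepsilon^{m_e^++\delta})$.

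For the \emph{photography map} $\Phi_\varepsilon$, given $q\in\partial M$, I would transplant the half-space ground state $V_\varepsilon(y)=V(y/\varepsilon)$ into $M$ via the Fermi coordinates $\psi_q^\partial$ around $q$, multiplied by a smooth cutoff supported in $I_q(R,\rho)$, and then project the resulting function onto $\mathcal{N}_\varepsilon$ by the usual rescaling $u\mapsto t_\varepsilon(u)u$ with $t_\varepsilon(u)>0$ chosen so that $J_\varepsilon'(t_\varepsilon u)(t_\varepsilon u)=0$. Using that Fermi coordinates satisfy $g_q(0)=\mathrm{Id}$ and that $V$ decays exponentially, a change of variables $y=\varepsilon z$ plus a dominated convergence argument shows
\[
\lim_{\varepsilon\to 0} J_\varepsilon(\Phi_\varepsilon(q))=E^+(V)=m_e^+,
\]
uniformly in $q\in\partial M$; hence for $\varepsilon<\varepsilon_0(\delta)$ the image lies in the sublevel set $J_\varepsilon^{m_e^++\delta}$.

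For the \emph{barycenter map} $\beta$, I would use that $M\subset \mathbb{R}^N$ via Nash and define a weighted center of mass in $\mathbb{R}^N$ from the density built out of $|u|^p$, truncated near $\partial M$, and then retract it onto $\partial M$. The decisive estimate, which I expect to be the main technical obstacle, is a concentration lemma: there exist $\delta_0\in(0,m_e^+)$ and $\varepsilon_0>0$ such that every $u\in\mathcal{N}_\varepsilon$ with $J_\varepsilon(u)\le m_e^++\delta_0$ has its ``$p$-mass'' $\varepsilon^{-N}\int |u^+|^p d\mu_g$ concentrated, up to an arbitrarily small fraction, inside some ball $B_g(\bar q,\rho/2)$ with $\bar q\in\partial M$. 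This is the place where the splitting $m_e=2m_e^+$ is used: if mass split non-trivially between an interior region and the boundary, or into two far-away boundary chunks, the energy would exceed $m_e^++\delta_0$ via a standard concentration-compactness argument comparing with the interior bubble (energy $m_e$) and with translates of the half-space bubble. With this lemma in hand, $\beta(u)\in(\partial M)_\rho$ is well defined and continuous, and plugging $u=\Phi_\varepsilon(q)$ one sees that $\beta(\Phi_\varepsilon(q))\to q$ uniformly as $\varepsilon\to 0$, giving the required homotopy to $i$ through a straight-line deformation in a tubular neighborhood.

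Combining these ingredients, the first part of Theorem \ref{teocat} applied to $J_\varepsilon|_{\mathcal{N}_\varepsilon}$ and the sublevel $d=m_e^++\delta$ produces at least $\cat(\partial M)$ critical points with $J_\varepsilon(u)<m_e^++\delta$. For the additional critical point above $m_e^++\delta$, I would invoke the second part of Theorem \ref{teocat}: the Nehari manifold $\mathcal{N}_\varepsilon$ is radially contractible (retract along $u\mapsto tu$ combined with $L^2$-normalization, or directly using the graph structure over the unit sphere in $H^1_g(M)$), so $\cat(\mathcal{N}_\varepsilon)=1$, whereas $\cat(J_\varepsilon^{m_e^++\delta})\ge \cat(\partial M)>1$ as soon as $\partial M$ is non-contractible; then Theorem \ref{teocat} provides a critical point $u\notin J_\varepsilon^{m_e^++\delta}$ with $J_\varepsilon(u)\le c$, where $c$ is the maximum of $J_\varepsilon$ on the contracting homotopy applied to $\Phi_\varepsilon(\partial M)$. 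I expect the main obstacle to be precisely the concentration lemma underlying $\beta$, because one must rule out interior concentration using the sharp gap $m_e^+<m_e$ and simultaneously control the metric error terms introduced by passing from Fermi coordinates to the Euclidean half-space model.
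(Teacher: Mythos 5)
Your proposal does not prove the statement at issue. Theorem \ref{teocat} is the \emph{abstract} Lusternik--Schnirelmann multiplicity result: for a $C^{1,1}$ functional $J$, bounded below and satisfying the Palais--Smale condition on a complete $C^{1,1}$ manifold $\mathcal{N}$, one has at least $\cat(J^{d})$ critical points in the sublevel $J^{d}$, and, if $\mathcal{N}$ is contractible while $\cat J^{d}>1$, an additional critical point outside $J^{d}$. This statement involves no boundary $\partial M$, no Fermi coordinates, no ground state $V$, and no levels $m_{e}^{+}$ or $m_{e}$; it is the general tool that the paper merely recalls (citing \cite{BCP91}) and later applies. What you wrote is instead a sketch of the \emph{application}: the photography map $\Phi_{\varepsilon}$, the barycenter $\beta$, the concentration lemma, and the contractible set used for the extra critical point --- i.e.\ the content of the paper's Lemma on $\cat(\partial M)\leq\cat\bigl(\mathcal{N}_{\varepsilon}\cap J_{\varepsilon}^{m_{e}^{+}+\delta}\bigr)$ and of Theorem \ref{mainteo}. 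Worse, your argument explicitly invokes ``the first part of Theorem \ref{teocat}'' and ``the second part of Theorem \ref{teocat}'', so with respect to the assigned statement it is circular: you assume exactly what you were asked to prove.

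A genuine proof of Theorem \ref{teocat} would run along the classical lines of Lusternik--Schnirelmann theory: define the minimax values $c_{k}=\inf\{c\in\mathbb{R}\ :\ \cat\left(J^{c}\right)\geq k\}$ (or equivalently minimax over sets of category at least $k$), use the Palais--Smale condition to establish a deformation lemma for sublevel sets away from critical levels, use the monotonicity and subadditivity of the category under deformations to show that each $c_{k}$ with $k\leq\cat(J^{d})$ is a critical value below $d$, and handle the case $c_{k}=c_{k+1}=\dots$ by showing the critical set at that level has category at least the multiplicity, hence is infinite or contains enough points. For the second claim one argues by contradiction: if there were no critical point outside $J^{d}$ (up to the level reached by a contraction of a set of category $>1$ in the contractible manifold $\mathcal{N}$), the deformation lemma would let you push that contraction down into $J^{d}$, forcing $\cat(J^{d})\leq 1$ and contradicting $\cat J^{d}>1$. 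None of these ingredients (minimax levels, deformation lemma, category-versus-deformation estimates) appear in your proposal, so the statement itself remains unproved.
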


Applying the first claim of Theorem \ref{teocat} to the functional
$J_{\varepsilon }$ on the manifold $\mathcal{N}_{\varepsilon }$ we obtain
$\cat\mathcal{N}_{\varepsilon }\cap J_{\varepsilon }^{m_{e}^{+}+\delta }$
critical points of $J_{\varepsilon }$. By the following Lemma we give an
estimate of $\cat\mathcal{N}_{\varepsilon }\cap J_{\varepsilon
}^{m_{e}^{+}+\delta}$ through the topological properties of the boundary of
$M$.

\begin{lemma}
For $\delta $ and $\varepsilon $ small enough we have $\cat(\partial M)\leq
\cat\mathcal{N}_{\varepsilon }\cap J_{\varepsilon }^{m_{e}^{+}+\delta }$.
\end{lemma}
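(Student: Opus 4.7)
The plan is to prove the inequality by constructing two continuous maps
$$\Phi_{\varepsilon}\colon \partial M\longrightarrow \mathcal{N}_{\varepsilon}\cap J_{\varepsilon}^{m_{e}^{+}+\delta},\qquad \beta\colon \mathcal{N}_{\varepsilon}\cap J_{\varepsilon}^{m_{e}^{+}+\delta}\longrightarrow (\partial M)_{\rho}\subset \mathbb{R}^{N},$$
whose composition $\beta\circ \Phi_{\varepsilon}$ is homotopic to the inclusion $\iota\colon \partial M\hookrightarrow (\partial M)_{\rho}$. Since $3\rho<\rho(\partial M)$ we have $\cat((\partial M)_{\rho})=\cat(\partial M)$, and the homotopy-invariance remark will then give
$$\cat(\partial M)\leq \cat((\partial M)_{\rho})\leq \cat\bigl(\mathcal{N}_{\varepsilon}\cap J_{\varepsilon}^{m_{e}^{+}+\delta}\bigr),$$
which is exactly the desired estimate.

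To define $\Phi_{\varepsilon}(q)$ for $q\in \partial M$, I would transplant the model half-space solution $V_{\varepsilon}$ onto $M$ through the Fermi chart $\psi_{q}^{\partial}$ of Remark~\ref{remrho}. Concretely, set
$$W_{\varepsilon,q}(\xi)=V\!\left(\tfrac{1}{\varepsilon}(\psi_{q}^{\partial})^{-1}(\xi)\right)\chi\!\left(d_{g}(\xi,q)\right)\quad\text{for }\xi\in I_{q}(R^{\partial},\rho),$$
extended by zero elsewhere, where $\chi$ is a smooth cutoff supported in $(-R,R)$ with $\chi\equiv1$ on $(-R/2,R/2)$. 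The map $\Phi_{\varepsilon}(q)$ will be the unique positive multiple $t_{\varepsilon,q}W_{\varepsilon,q}$ lying on $\mathcal{N}_{\varepsilon}$. Using the exponential decay of $V$ and the expansion $g_{q}(\varepsilon y)=\Id+O(\varepsilon)$ in Fermi coordinates, a change-of-variables computation will give $t_{\varepsilon,q}\to 1$ and $J_{\varepsilon}(\Phi_{\varepsilon}(q))\to E^{+}(V)=m_{e}^{+}$ uniformly in $q\in \partial M$; in particular, for $\varepsilon$ small, $\Phi_{\varepsilon}$ lands in $J_{\varepsilon}^{m_{e}^{+}+\delta}$ as required.

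For the barycenter map $\beta$ I would exploit the Nash embedding $M\subset \mathbb{R}^{N}$ and set
$$\beta(u)=\frac{\displaystyle\int_{M}x\,|u^{+}|^{p}\,d\mu_{g}}{\displaystyle\int_{M}|u^{+}|^{p}\,d\mu_{g}}\in \mathbb{R}^{N}.$$
The main technical point — and the most delicate step — is a concentration lemma asserting that if $u\in \mathcal{N}_{\varepsilon}$ satisfies $J_{\varepsilon}(u)\leq m_{e}^{+}+\delta$ with $\delta<\delta_{0}$ and $\varepsilon$ small, then $\beta(u)$ lies in $(\partial M)_{\rho}$. The proof of this is by contradiction and a splitting argument: if along some sequence $\beta(u_{n})$ stayed uniformly away from $\partial M$, a covering/partition-of-unity scaling argument combined with $m_{e}=2m_{e}^{+}$ would force $J_{\varepsilon}(u_{n})$ to approach the interior energy level $m_{e}=2m_{e}^{+}$, contradicting $J_{\varepsilon}(u_{n})\leq m_{e}^{+}+\delta_{0}<2m_{e}^{+}$. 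This is the key obstacle — one has to control the loss of mass near the boundary precisely enough to distinguish the half-space least energy $m_{e}^{+}$ from the full-space one $m_{e}$.

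Finally, to see that $\beta\circ\Phi_{\varepsilon}\simeq \iota$, a direct computation using the exponential decay of $V$ and the fact that the Fermi coordinates are centered at $q$ gives $\beta(\Phi_{\varepsilon}(q))=q+o(1)$ uniformly in $q\in \partial M$ as $\varepsilon\to 0$. Since both $\beta(\Phi_{\varepsilon}(q))$ and $q$ lie in $(\partial M)_{\rho}$ for $\varepsilon$ small, the straight-line segment $t\mapsto (1-t)q+t\,\beta(\Phi_{\varepsilon}(q))$ stays inside $(\partial M)_{\rho}$ (because it has length $o(1)<\rho$), providing the required homotopy in $(\partial M)_{\rho}$. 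This closes the argument.
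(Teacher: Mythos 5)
Your proposal is correct and follows essentially the same route as the paper: a Fermi-coordinate transplant of $V_{\varepsilon}$ projected onto $\mathcal{N}_{\varepsilon}$ for $\Phi_{\varepsilon}$, the barycenter in the Nash embedding for $\beta$, a concentration lemma resting on the dichotomy $m_{e}=2m_{e}^{+}$ to place $\beta(u)$ near $\partial M$, and the homotopy $\beta\circ\Phi_{\varepsilon}\simeq\iota$ via the exponential decay of $V$. The only differences are cosmetic (a single radial cutoff instead of the paper's product $\chi_{R}(|\bar y|)\chi_{\rho}(y_{n})$, and $(\partial M)_{\rho}$ instead of $(\partial M)_{3\rho}$), and the concentration step you correctly flag as the delicate point is carried out in the paper exactly along the lines you sketch, via a partition of $M$ at scale $\varepsilon$, Ekeland's principle, and a blow-up to a least-energy profile on $\mathbb{R}^{n}_{+}$ or $\mathbb{R}^{n}$.
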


We are able to obtain the proof of this lemma building two suitable maps. To
this aim we recall that by Nash embedding theorem \cite{Na56}
we may assume that $M$ is
embedded in a Euclidean space $\mathbb{R}^{N}$.

Hence the lemma follows by building a map $\Phi _{\varepsilon }:\partial
M\rightarrow \mathcal{N}_{\varepsilon }\cap
J_{\varepsilon}^{m_{e}^{+}+\delta }$ and a map
$\beta :\mathcal{N}_{\varepsilon }\cap J_{\varepsilon }^{m_{e}^{+}+\delta }
\rightarrow \left(
\partial M\right)_{\rho }$ with $0<\rho <\rho (\partial M)$ such that 
$\beta \circ
\Phi _{\varepsilon}:\partial M \rightarrow \left( \partial M\right) _{\rho }$
is homotopic to the identity on $\partial M$ (see sections 4,5,6).
Then by the properties of the
category we get $\cat(\partial M)\leq \cat\mathcal{N}_{\varepsilon }\cap
J_{\varepsilon }^{m_{e}^{+}+\delta }$.

To finish the proof of Theorem \ref{mainteo} we build a set
$T_{\varepsilon} $ (Section 7) such that
\begin{equation*}
\Phi _{\varepsilon }(\partial M)\subset T_{\varepsilon }\subset
\mathcal{N}_{\varepsilon }\cap J_{\varepsilon }^{c_{\varepsilon }}
\end{equation*}
for a bounded constant $c_{\varepsilon }\leq c$, and such that
$T_{\varepsilon }$ is a contractible set in
$\mathcal{N}_{\varepsilon }\cap J_{\varepsilon }^{c_{\varepsilon }}$ 
 containing  only positive functions. Since
$1<\cat(\partial M)\leq \cat\left(\Phi _{\varepsilon }(\partial M)\right)$ 
by the same
argument of Theorem \ref{teocat} there exists a critical point $\bar{u}$ of
$J_{\varepsilon }$ in $\mathcal{N}_{\varepsilon }$ such that
$m_{e}^{+}+\delta \leq J_{\varepsilon }\left( \bar{u}\right) \leq
c_{\varepsilon }$.

It remains to show that the critical points we have found are non-constant
functions. This follows immediately from the fact that the only constant
function on the Nehari manifold $\mathcal{N}_{\varepsilon }$ is the function
$\bar{v}(x)\equiv 1$, for which
\begin{equation*}
J_{\varepsilon }(\bar{v})=\left( \frac{1}{2}-\frac{1}{p}\right) \frac{\mu
_{g}(M)}{\varepsilon^n }\rightarrow \infty \text{ as }\varepsilon \rightarrow
0.
\end{equation*}
Hence the constant solution is excluded because $c_{\varepsilon }$ is
bounded.

\subsection{Notation}

We will use the following notation

\begin{itemize}
\item $\displaystyle||u||_{g}=||u||_{H_{g}^{1}}=\int_{M}|\nabla
_{g}u|^{2}+|u|^{2}d\mu _{g}$,\ \
$\displaystyle |u|_{p,g}^{p}=\int_{M}|u|^{p}d\mu _{g};$

\item $\displaystyle|||u|||_{\varepsilon }=
\displaystyle|||u|||_{\varepsilon,M }=\frac{1}{\varepsilon ^{n}}
\int_{M}\varepsilon ^{2}|\nabla _{g}u|^{2}+|u|^{2}d\mu _{g}$,\ \
$\displaystyle |u|_{p,\varepsilon }^{p}=\frac{1}{\varepsilon ^{n}}\int_{M}|u|^{p}d\mu _{g};$

\item $\displaystyle|u|_{p}^{p}=\int_{\mathbb{R}^{n}}|u|^{p}dx;$

\item If $A,B\subset \mathbb{R}^{n}$, then $A\Delta B:=A\smallsetminus B\cup
B\smallsetminus A.$

\item $d_{g}$ is the geodesic distance on $M$, and $d_{g}^{\partial }$ is
the geodesic distance on $\partial M$.

\item $\exp ^{\partial }$ is the exponential map on $\partial M$.

\item $I_q(R,\rho)=\{\chi\in M\ :\ d_g(\chi,\partial M)<\rho, \ d_g^\partial(\bar{\chi},q)<R\}$,
where $\bar{\chi}\in\partial M$ is the unique point such
that $d_g(\chi,\bar{\chi})= d_g(\chi,\partial M)$.

\item $B(x,R)\subset \mathbb{R}^n$ is the ball centered in $x$ of radius $R$.

\item $B_{n-1}(x,R)\subset \mathbb{R}^{n-1}$ is the $n-1$ ball centered in
$x $ of radius $R$.
\end{itemize}

\section{The map $\Phi _{\protect\varepsilon }$}

Let us define $\chi _{R}:\mathbb{R^{+}}\rightarrow \mathbb{R^{+}}$ a smooth
cut off function such that $\chi _{R}(t)\equiv 1$ if $0\leq t\leq R/2$,
$\chi_{R}(t)\equiv 0$ if $R\leq t$, and
$|\chi _{R}^{\prime }(t)|\leq 2/R$ for all
$t$. Fixed a point $q\in \partial M$ and $\varepsilon >0$, let us define on
$M $ the function $Z_{\varepsilon ,q}(\xi )$ as
\begin{equation}\label{zeq}
Z_{\varepsilon ,q}(\xi )=\left\{
\begin{array}{cl}
V_{\varepsilon }\left( y(\xi )\right) \chi _{R}\left( |\bar{y}(\xi )|\right)
\chi _{\rho }\left( y_{n}(\xi )\right) & \text{if }\xi \in I_{q} \\
&  \\
0 & \text{otherwise}
\end{array}
\right.
\end{equation}
where
\begin{equation*}
I_{q}(R,\rho )=I_{q}=\left\{ \xi \in M:y_{n}=d_{g}(\xi ,\partial M)<\rho
\text{ and }\left\vert \bar{y}\right\vert =d_{g}^{\partial }\left( \exp
_{q}^{\partial }(\bar{y}(\xi )),q\right) <R\right\} .
\end{equation*}
Here $y(\xi )=(\bar{y}(\xi ),y_{n}(\xi ))=\left( \psi _{q}^{\partial
}\right) ^{-1}(\xi )$ are the Fermi coordinates at $q\in \partial M$ and
$\exp _{q}^{\partial }:T_{q}(\partial M)\rightarrow \partial M$, is the
exponential map on $\partial M$.

For each $\varepsilon >0$ we can define a positive number $t_{\varepsilon
}(Z_{\varepsilon ,q})$ such that $t_{\varepsilon }(Z_{\varepsilon
,q})Z_{\varepsilon ,q}\in H_{g}^{1}(M)\cap \mathcal{N_{\varepsilon }}$.
Namely, $t_{\varepsilon }(Z_{\varepsilon ,q})$ turns out to verify
\begin{equation}
t_{\varepsilon }(Z_{\varepsilon ,q})=
\left( \frac{|||Z_{\varepsilon,q}|||_{\varepsilon }^{2}}
{|Z_{\varepsilon ,q}|_{p,\varepsilon}^{p}}\right) ^{\frac{1}{{p-2}}}.
\end{equation}
Thus we can define a function $\Phi _{\varepsilon }:\partial M\rightarrow
\mathcal{N_{\varepsilon }}$, $\Phi _{\varepsilon }(q)=t_{\varepsilon
}(Z_{\varepsilon ,q})Z_{\varepsilon ,q}$

\begin{prop}
\label{propphi}For any $\varepsilon >0$ the application $\Phi _{\varepsilon
}:\partial M\rightarrow \mathcal{N}_{\varepsilon }$ is continuous. Moreover,
for any $\delta >0$ there exists $\varepsilon _{0}=\varepsilon _{0}(\delta
)>0$ such that, if $\varepsilon <\varepsilon _{0}$ then
\begin{equation*}
\Phi _{\varepsilon }(q)\in \mathcal{N}_{\varepsilon }\cap J_{\varepsilon
}^{m_{e}^{+}+\delta }\text{ for all }q\in \partial M
\end{equation*}
\end{prop}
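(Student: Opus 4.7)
The proposition has two parts: continuity of $\Phi_\varepsilon$ at fixed $\varepsilon$, and the uniform energy bound for small $\varepsilon$. The continuity part is straightforward. The Fermi chart $\psi_q^\partial$ depends smoothly on $q$ (since $R<\rho<\rho(\partial M)$ keeps us away from the cut locus of $\partial M$), the cutoffs $\chi_R$ and $\chi_\rho$ are fixed, and $V_\varepsilon$ is fixed for each $\varepsilon$. Hence $q\mapsto Z_{\varepsilon,q}$ is continuous from $\partial M$ into $H^1_g(M)$, and the explicit formula for $t_\varepsilon$ gives continuity of $\Phi_\varepsilon$ into $\mathcal{N}_\varepsilon$.

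For the energy estimate, I would first use that $\Phi_\varepsilon(q)\in\mathcal{N}_\varepsilon$, so
\[
J_\varepsilon(\Phi_\varepsilon(q))
=\left(\frac{1}{2}-\frac{1}{p}\right) t_\varepsilon(Z_{\varepsilon,q})^2\,|||Z_{\varepsilon,q}|||_\varepsilon^2
=\left(\frac{1}{2}-\frac{1}{p}\right)\left(\frac{|||Z_{\varepsilon,q}|||_\varepsilon^2}{|Z_{\varepsilon,q}|_{p,\varepsilon}^2}\right)^{\!p/(p-2)}.
\]
Since $m_e^+=\bigl(\tfrac{1}{2}-\tfrac{1}{p}\bigr)(S_e^+)^{p/(p-2)}$ and $S_e^+$ is attained by $V$, the task reduces to showing
\[
\lim_{\varepsilon\to 0}|||Z_{\varepsilon,q}|||_\varepsilon^2=\|V\|_{H^1(\mathbb{R}^n_+)}^2,
\qquad
\lim_{\varepsilon\to 0}|Z_{\varepsilon,q}|_{p,\varepsilon}^p=|V|_p^p,
\]
uniformly for $q\in\partial M$, from which $t_\varepsilon(Z_{\varepsilon,q})\to 1$ and $J_\varepsilon(\Phi_\varepsilon(q))\to m_e^+$ uniformly, yielding the required $\varepsilon_0(\delta)$.

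The core computation is a change of variables. In Fermi coordinates at $q$, using $|g_q(y)|^{1/2}\,dy$ as the volume element and $g_q^{ij}(y)$ for the inverse metric, one gets
\[
|||Z_{\varepsilon,q}|||_\varepsilon^2
=\frac{1}{\varepsilon^n}\int_{y\in(\psi_q^\partial)^{-1}(I_q)}\!\!\Bigl(\varepsilon^2 g_q^{ij}(y)\partial_i Z\,\partial_j Z+Z^2\Bigr)|g_q(y)|^{1/2}dy,
\]
where $Z(y)=V(y/\varepsilon)\chi_R(|\bar y|)\chi_\rho(y_n)$. Rescaling $z=y/\varepsilon$ turns this into an integral over $B_{n-1}(0,R/\varepsilon)\times[0,\rho/\varepsilon)$, a region exhausting $\mathbb{R}^n_+$, whose integrand is
\[
g_q^{ij}(\varepsilon z)\,\partial_i\!\bigl[V(z)\chi_R(\varepsilon|\bar z|)\chi_\rho(\varepsilon z_n)\bigr]\partial_j\!\bigl[\cdots\bigr]+V^2\chi_R^2\chi_\rho^2,
\]
weighted by $|g_q(\varepsilon z)|^{1/2}$. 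As $\varepsilon\to 0$, the metric factors tend to $\mathrm{Id}$, the cutoffs tend to $1$ pointwise, and the $\varepsilon$-prefactor on $\nabla\chi_R$ and $\nabla\chi_\rho$ kills those contributions. Exponential decay of $V$ and its derivatives supplies an integrable majorant, so dominated convergence delivers the $H^1(\mathbb{R}^n_+)$ limit; an analogous and simpler argument handles the $L^p$ integral.

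The expected obstacle is the uniformity of these limits in $q\in\partial M$. I would address this by using compactness of $\partial M$: the family $\{g_q\}_{q\in\partial M}$ is uniformly smooth in $(q,y)$, so $\|g_q(\varepsilon z)-\mathrm{Id}\|$ admits a modulus of continuity independent of $q$, and the dominating function $C\exp(-\mu|z|)$ may also be chosen independent of $q$. This turns the pointwise-in-$q$ dominated convergence into a uniform statement, which is exactly what the final $\varepsilon_0(\delta)$ requires.
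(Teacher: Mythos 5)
Your proposal is correct and follows essentially the same route as the paper: continuity via smooth dependence of the Fermi chart on $q$, and the energy bound via the rescaled limits of the $L^2$, $L^p$ and gradient norms of $Z_{\varepsilon,q}$, uniform in $q$ by compactness of $\partial M$. The only cosmetic difference is that you express $J_\varepsilon(\Phi_\varepsilon(q))$ directly as the Rayleigh quotient raised to the power $p/(p-2)$ and identify the limit with $(S_e^+)^{p/(p-2)}$, whereas the paper shows $t_\varepsilon(Z_{\varepsilon,q})\to 1$ and passes to the limit in $(\tfrac12-\tfrac1p)t_\varepsilon^p|Z_{\varepsilon,q}|_{p,\varepsilon}^p$; these are equivalent.
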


\begin{proof}
Fixed $\varepsilon >0$, by the continuity of $u\rightarrow
t_{\varepsilon }(u)$ on $H_{g}^{1}(M)$ it is enough to prove that for any
sequence $\left\{ q_{k}\right\} \subset \partial M$ convergent to $q$ we
have
\begin{equation*}
\lim_{k\rightarrow \infty }\left\vert \left\vert Z_{\varepsilon
,q_{k}}-Z_{\varepsilon ,q}\right\vert \right\vert _{H_{g}^{1}}=0.
\end{equation*}
Since $q_{k}$ converges to $q$, we have $\mu _{g}(I_{q_{k}}\Delta
I_{q})\rightarrow 0$ as $k\rightarrow \infty $, then we have
\begin{equation*}
\int_{I_{q_{k}}\Delta I_{q}}\left\vert Z_{\varepsilon ,q_{k}}-Z_{\varepsilon
,q}\right\vert ^{2}d\mu _{g}\rightarrow 0\text{ as }k\rightarrow \infty .
\end{equation*}
Now, setting $\eta _{k}(\bar{y},y_{n})=\left( \psi _{q_{k}}^{\partial
}\right) ^{-1}\left( \psi _{q}^{\partial }(\bar{y},y_{n})\right) $ and
$A_{k}=\left( \psi _{q}^{\partial }\right) ^{-1}(I_{q_{k}}\cap I_{q})$ we can
write

\begin{multline*}
\int_{I_{q_{k}}\cap I_{q}} \left\vert
Z_{\varepsilon,q_{k}}(x)-Z_{\varepsilon ,q}(x)\right\vert ^{2}d\mu _{g}= \\
\int_{A_{k}}\Big\vert V_{\varepsilon }(\eta _{k}(\bar{y},y_{n}))
\chi_{R}\left( \left\vert \pi _{\mathbb{R}^{n-1}}\eta _{k}(\bar{y},y_{n})
\right\vert \right) \chi _{\rho }\left( d_{g}(q_{k},\partial M)\right) - \\
-V_{\varepsilon }\left( \bar{y},y_{n}\right) \chi _{R} \left(\left\vert
\bar{y}\right\vert \right) \chi _{\rho }\left( d_{g}(q,\partial M)\right)
\Big\vert ^{2} \left\vert g_{q}(\bar{y},y_{n})\right\vert ^{1/2}d\bar{y}
dy_{n}\leq \\
\leq c\int_{A_{k}}\left\vert \eta _{k}(\bar{y},y_{n})-
(\bar{y},y_{n})\right\vert ^{2}d\bar{y}dy_{n}
\end{multline*}
for a suitable constant $c$ coming from the mean value theorem applied to
$V_{\varepsilon },\chi _{\rho },\chi _{R}$. By the definition of $\eta _{k}$
and the smoothness of the exponential map we get
\begin{equation*}
\left\vert \left\vert Z_{\varepsilon ,q_{k}}-Z_{\varepsilon ,q}\right\vert
\right\vert _{L_{g}^{2}}\rightarrow 0\text{ as }k\rightarrow \infty .
\end{equation*}
A similar argument can be used to show that $\left\vert \left\vert \nabla
_{g}Z_{\varepsilon ,q_{k}}-\nabla _{g}Z_{\varepsilon ,q}\right\vert
\right\vert _{L_{g}^{2}}\rightarrow 0$ as $k\rightarrow \infty $.

To prove the second statement of the theorem we first show that the
following limits hold uniformly with respect to $q\in \partial M$.
\begin{equation}
\lim_{\varepsilon \rightarrow 0}\left\vert \left\vert Z_{\varepsilon
,q}\right\vert \right\vert _{2,\varepsilon }^{2}=
\int_{\mathbb{R}_{+}^{n}}V^{2}(y)dy  \label{eql2}
\end{equation}
\begin{equation}
\lim_{\varepsilon \rightarrow 0}\left\vert \left\vert Z_{\varepsilon
,q}\right\vert \right\vert _{p,\varepsilon }^{p}=
\int_{\mathbb{R}_{+}^{n}}V^{p}(y)dy  \label{eqlp}
\end{equation}
\begin{equation}
\lim_{\varepsilon \rightarrow 0}\varepsilon ^{2}\left\vert \left\vert \nabla
Z_{\varepsilon ,q}\right\vert \right\vert _{2,\varepsilon }^{2}=
\int_{\mathbb{R}_{+}^{n}}\left\vert \nabla V\right\vert ^{2}(y)dy  \label{eqgrad}
\end{equation}
where $\left\vert \left\vert u\right\vert \right\vert _{q,\varepsilon }=
\frac{1}{\varepsilon ^{n}}\left\vert \left\vert u\right\vert \right\vert
_{L^{q}}$. For (\ref{eql2}) we have
\begin{eqnarray*}
&&\frac{1}{\varepsilon ^{n}} \int_{M}\left\vert
Z_{\varepsilon,q}(x)\right\vert ^{2}d\mu _{g}= \\
&&=\frac{1}{\varepsilon ^{n}} \int_{|\bar{y}|<R,0<y_{n}<\rho}
V_{\varepsilon}^{2}(\bar{y},y_{n})\chi _{R}^{2} 
\left( \left\vert \bar{y}\right\vert\right) 
\chi _{\rho }^{2} \left( y_{n}\right) \left\vert
g_q(\bar{y},y_{n})\right\vert ^{1/2}d\bar{y}dy_{n}= \\
&&=\int_{|\bar{z}|<R/\varepsilon ,0<z_{n}<\rho /\varepsilon }V^{2}(\bar{z},z_{n})
\chi _{R/\varepsilon }^{2}\left( \left\vert \bar{z}\right\vert\right)
\chi _{\rho /\varepsilon }^{2}\left( z_{n}\right) \left\vert
g_q(\varepsilon (\bar{z},z_{n}))\right\vert ^{1/2}
d\bar{z}dz_{n}= \\
&&=\int_{B_K}V^{2}(\bar{z},z_{n})\chi _{R/\varepsilon }^{2}
\left(\left\vert \bar{z}\right\vert \right) 
\chi _{\rho /\varepsilon }^{2}
\left(z_{n}\right) 
\left\vert g_q(\varepsilon (\bar{z},z_{n}))\right\vert ^{1/2}d\bar{z}dz_{n}+ \\
&&+\int_{\mathbb{R}^{n}_+\smallsetminus B_K}V^{2}(\bar{z},z_{n})
\chi_{R/\varepsilon }^{2}\left( \left\vert \bar{z}\right\vert \right)
\chi_{\rho /\varepsilon }^{2}\left( z_{n}\right) \left\vert
g_q(\varepsilon (\bar{z},z_{n}))\right\vert ^{1/2}
d\bar{z}dz_{n},
\end{eqnarray*}
where $B_k=B(0,K)\cap \{z_n>0\}$.
It is easy to see that the second addendum vanishes when
$K\rightarrow\infty$. With respect to the first addendum, fixed $K$ large
enough, by compactness of manifold $M$ and regularity of the exponential
map and of the Riemannian metric $g$ we have,
for $\varepsilon\rightarrow0$,
\begin{equation*}
\int_{B_K}V^{2}(\bar{z},z_{n})\chi _{R/\varepsilon }^{2}\left( \left\vert
\bar{z}\right\vert \right) \chi _{\rho /\varepsilon }^{2}\left( z_{n}\right)
\left\vert g_{\psi _{q}^{\partial }}(\varepsilon (\bar{z},z_{n}))\right\vert
^{1/2}d\bar{z}dz_{n}\rightarrow \int_{B_K}V^{2}(y)dy
\end{equation*}
uniformly  with respect to $q\in \partial M$. So we prove (\ref{eql2}). In
the same way we can prove (\ref{eqlp}) and (\ref{eqgrad}).

At this point we observe that
\begin{equation*}
J_{\varepsilon }(t_{\varepsilon }(Z_{\varepsilon ,q})Z_{\varepsilon
,q})=\left( \frac{1}{2}-\frac{1}{p}\right) [t_{\varepsilon }(Z_{\varepsilon
,q})]^{p}\left\vert \left\vert Z_{\varepsilon ,q}\right\vert \right\vert
_{\varepsilon ,p}^{p}.
\end{equation*}
By definition of $t_{\varepsilon }(Z_{\varepsilon ,q})$ and by (\ref{eql2}),
(\ref{eqlp}) and (\ref{eqgrad}) we have that $t_{\varepsilon
}(Z_{\varepsilon ,q})\rightarrow 1$ as $\varepsilon \rightarrow 0$,
uniformly with respect to $q\in \partial M$. Concluding we have
\begin{equation}  \label{jeps}
\lim_{\varepsilon \rightarrow 0}J_{\varepsilon }(t_{\varepsilon
}(Z_{\varepsilon ,q})Z_{\varepsilon ,q})=\left( \frac{1}{2}-\frac{1}{p}
\right) \int_{\mathbb{R}_{+}^{n}}V^{p}(y)dy=m_{e}^{+}
\end{equation}
uniformly with respect to $q\in \partial M$.
\end{proof}

\begin{rem}
\label{remlimsup}By Proposition \ref{propphi}, given $\delta $, we have that
$\mathcal{N}_{\varepsilon }\cap J_{\varepsilon }^{m_{e}^{+}+\delta }
\neq\emptyset$ for $\varepsilon $ small enough. Moreover let
\begin{equation*}
m_{\varepsilon }:=\inf \left\{ J_{\varepsilon }(u)\,:\,
u\in \mathcal{N}_{\varepsilon }\right\} .
\end{equation*}
At this point we have
\begin{equation*}
\limsup_{\varepsilon \rightarrow 0}m_{\varepsilon }\leq m_{e}^{+}.
\end{equation*}
\end{rem}

\section{Concentration properties}

In this section we will show a property of concentration of the functions
$u\in \mathcal{N}_{\varepsilon }\cap J_{\varepsilon }^{m_{e}^++\delta }$ when
$\varepsilon $ and $\delta $ are sufficiently small. This concentration
property will be crucial to verify that the barycenter $\beta (u)$ (see
Section 6) of the functions $u\in \mathcal{N}_{\varepsilon }\cap
J_{\varepsilon }^{m_{e}^++\delta }$ is close to the boundary $\partial M$.

For any $\varepsilon >0$ we can construct a finite closed partition
$\mathcal{P}^{\varepsilon }=\left\{ P_{j}^{\varepsilon }\right\}_{j\in
\Lambda_\varepsilon}$ of $M$ such that

\begin{itemize}
\item $P_{j}^{\varepsilon }$ is closed for every $j$;

\item $P_{j}^{\varepsilon }\cap P_{k}^{\varepsilon }\subset \partial
P_{j}^{\varepsilon }\cap \partial P_{k}^{\varepsilon }$ for $j\neq k$;

\item $K_1\varepsilon \leq d_j^{\varepsilon }\leq K_2\varepsilon $, where
$d_j^{\varepsilon }$ is the diameter of $P_{j}^{\varepsilon }$;

\item $c_1\varepsilon ^{n}\leq \mu _{g}\left( P_{j}^{\varepsilon }\right)
\leq c_2\varepsilon ^{n}$;

\item for any $j$ there exists an open set $I_{j}^{\varepsilon }\supset
P_{j}^{\varepsilon }$ such that, if $P_{j}^{\varepsilon }\cap \partial
M=\emptyset $, then $d_{g}\left( I_{j}^{\varepsilon },\partial M\right)
>K\varepsilon /2$, while, if $P_{j}^{\varepsilon }\cap \partial M\neq
\emptyset $, then $I_{j}^{\varepsilon }\subset \left\{ x\in
M\,:\,d_{g}\left( x,\partial M\right) \leq \frac{3}{2}K\varepsilon \right\}$;

\item there exists a finite number $\nu (M)\in \mathbb{N}$ such that every
$x\in M$ is contained in at most $\nu (M)$ sets $I_{j}^{\varepsilon }$, where
$\nu (M)$ does not depends on $\varepsilon $.
\end{itemize}

By compactness of $M$ such a partition exists, at least for small
$\varepsilon $. In the following we will choose always 
$\varepsilon_0(\delta)$
sufficiently small in order to have this partition.

\begin{lemma}
\label{lemmagamma}There exists a constant $\gamma >0$ such that, for any
fixed $\delta >0$ and for any $\varepsilon \in (0,\varepsilon _{0}(\delta ))$,
where $\varepsilon _{0}(\delta )$ is as in Proposition \ref{propphi},
given any partition $\mathcal{P}^{\varepsilon }$of $M$ as above, and any
function $u\in \mathcal{N}_{\varepsilon }\cap J_{\varepsilon }^{m_{e}^++\delta
}$, there exists a set $P_{j}^{\varepsilon }\subset \mathcal{P}^{\varepsilon
}$ such that
\begin{equation*}
\frac{1}{\varepsilon ^{n}}\int_{P_{j}^{\varepsilon }}|u^{+}|^{p}d\mu
_{g}\geq \gamma >0.
\end{equation*}
\end{lemma}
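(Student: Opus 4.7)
My plan is to argue by contradiction, using only the Nehari identity together with a local Sobolev estimate; the energy bound $J_\varepsilon(u)\le m_e^++\delta$ plays no role, so the constant $\gamma$ will depend only on $M$. Suppose that for some $\gamma>0$ to be chosen, some $u\in\mathcal{N}_\varepsilon\cap J_\varepsilon^{m_e^++\delta}$ satisfies
$$\frac{1}{\varepsilon^n}\int_{P_j^\varepsilon}|u^+|^p\,d\mu_g<\gamma\qquad\text{for every }j\in\Lambda_\varepsilon.$$
Since $u\in\mathcal{N}_\varepsilon$, we have $|||u|||_\varepsilon^2=|u^+|_{p,\varepsilon}^p>0$. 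The target is an inequality of the form $|u^+|_{p,\varepsilon}^p\le C\,\gamma^{(p-2)/p}\,|||u|||_\varepsilon^2$ with a universal constant $C$; combined with the Nehari identity this forces $\gamma^{(p-2)/p}\ge 1/C$, contradicting small choices of $\gamma$.

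The core technical step is a local Sobolev embedding on each enlargement $I_j^\varepsilon\supset P_j^\varepsilon$. I would read $u$ in the appropriate chart---normal coordinates when $P_j^\varepsilon$ is interior, Fermi coordinates anchored at a boundary point when $P_j^\varepsilon$ touches $\partial M$---and rescale by $\varepsilon$, setting $v(y)=u(\text{chart}(\varepsilon y))$. The rescaled image $\tilde I_j^\varepsilon$ then sits in a ball, or half-ball, of radius bounded by a multiple of $K_2$, and the pulled-back metric is $\Id+O(\varepsilon)$ because $g_q(0)=\Id$. Hence the standard Sobolev inequality
$$\|v\|_{L^p(\tilde I_j^\varepsilon)}^2\le C_0\,\|v\|_{H^1(\tilde I_j^\varepsilon)}^2$$
holds with a constant $C_0$ independent of $j$, $\varepsilon$ and the reference chart, provided $\varepsilon_0$ is small.

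With that in hand, the elementary factorisation
$$\|v^+\|_{L^p(\tilde P_j^\varepsilon)}^p=\|v^+\|_{L^p(\tilde P_j^\varepsilon)}^{p-2}\cdot\|v^+\|_{L^p(\tilde P_j^\varepsilon)}^2,$$
together with $|v^+|\le|v|$, the inclusion $\tilde P_j^\varepsilon\subset\tilde I_j^\varepsilon$ and the Sobolev inequality above, yields, after unfolding the rescaling,
$$\frac{1}{\varepsilon^n}\int_{P_j^\varepsilon}|u^+|^p\,d\mu_g\le C_0\,\gamma^{(p-2)/p}\,\frac{1}{\varepsilon^n}\int_{I_j^\varepsilon}\bigl(\varepsilon^2|\nabla_g u|^2+u^2\bigr)d\mu_g.$$
Summing over $j$ and exploiting the finite-overlap property (every $x\in M$ lies in at most $\nu(M)$ of the $I_j^\varepsilon$), one obtains
$$|u^+|_{p,\varepsilon}^p\le C_0\,\nu(M)\,\gamma^{(p-2)/p}\,|||u|||_\varepsilon^2=C_0\,\nu(M)\,\gamma^{(p-2)/p}\,|u^+|_{p,\varepsilon}^p.$$
Since $|u^+|_{p,\varepsilon}^p>0$, this rearranges to $1\le C_0\,\nu(M)\,\gamma^{(p-2)/p}$, and picking $\gamma<(C_0\,\nu(M))^{-p/(p-2)}$ closes the contradiction. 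The resulting $\gamma$ depends only on $M$.

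The point I expect to demand most care is the uniformity of the Sobolev constant $C_0$ across all pieces. Interior pieces rescale to subsets of almost-Euclidean balls, but boundary pieces require working in half-balls whose shape may inherit irregularities from the partition. One has to arrange the enlargements $I_j^\varepsilon$ so that their rescaled boundaries are uniformly Lipschitz, guaranteeing a single $H^1\hookrightarrow L^p$ constant, and check that the $O(\varepsilon)$ metric perturbation from $g_q(\varepsilon y)=\Id+O(\varepsilon)$ can be absorbed into that constant independently of $q\in\partial M$ and of the chosen interior centers. Once this uniformity is in place, everything else reduces to the bookkeeping of the rescaling and the finite-overlap count.
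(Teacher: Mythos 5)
Your argument is correct and is essentially the paper's own proof: the same factorisation $|u_j^+|_p^p=|u_j^+|_p^{p-2}\,|u_j^+|_p^2$, a rescaled local Sobolev inequality on the enlargements $I_j^\varepsilon$, the finite-overlap count $\nu(M)$, and the Nehari identity, merely phrased as a contradiction instead of a direct lower bound on $\max_j$. The one point you flag as delicate --- uniformity of the Sobolev constant over irregular rescaled pieces --- is handled in the paper by multiplying $u^{+}$ by cutoffs $\chi_\varepsilon^{j}$ supported in $I_j^\varepsilon$ with $|\nabla_g\chi_\varepsilon^{j}|\leq K/\varepsilon$ and applying the scaled Sobolev inequality to $\tilde u_j=u^{+}\chi_\varepsilon^{j}\in H^1_g(M)$, which costs only an extra $K^2|u^{+}|^2$ term that the finite overlap absorbs; adopting that device would close the only gap you left open.
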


\begin{proof}
By Remark \ref{remlimsup} we have that 
$\mathcal{N}_{\varepsilon }\cap J_{\varepsilon }^{m_{e}^++\delta}\neq\emptyset$.
For any function 
$u\in \mathcal{N}_{\varepsilon }\cap J_{\varepsilon }^{m_{e}^++\delta}$ we denote
by $u_{j}^{+}$ the restriction of $u^{+}$ to the set $P_{j}^{\varepsilon }$.
Then we can write
\begin{multline*}
\frac{1}{\varepsilon ^{n}}\int_{M}\left( \varepsilon ^{2}|\nabla
_{g}u|^{2}+u^{2}\right) d\mu _{g} =\frac{1}{\varepsilon ^{n}}
\int_{M}(u^{+})^{p}d\mu _{g}=\frac{1}{\varepsilon ^{n}}\sum_{j}
\int_{M}(u_{j}^{+})^{p}d\mu _{g}= \\
=\sum_{j}\frac{|u_{j}^{+}|_{p}^{p-2}}{\varepsilon ^{\frac{n(p-2)}{p}}}
\frac{|u_{j}^{+}|_{p}^{2}}{\varepsilon ^{\frac{2n}{p}}}\leq \max_{j}\left\{
\frac{|u_{j}^{+}|_{p}^{p-2}}{\varepsilon ^{\frac{n(p-2)}{p}}}\right\}
\sum_{j}\frac{|u_{j}^{+}|_{p}^{2}}{\varepsilon ^{\frac{2n}{p}}}.
\end{multline*}
We define the functions $\tilde{u}_{j}$ by using a smooth real cutoff
function $\chi _{\varepsilon }^{j}:M\rightarrow [0,1]$ such that
$|\nabla _{g}\chi _{\varepsilon }^{j}|\leq \frac{K}{\varepsilon }$ for some
constant $K$ and, if $P_{j}^{\varepsilon }\cap \partial M=\emptyset $, then
$\chi _{\varepsilon }^{j}=1$ for $x\in P_{j}^{\varepsilon }$ and $\chi
_{\varepsilon }^{j}=0$ for $x\in M\smallsetminus I_{j}^{\varepsilon }$,
while if $P_{j}^{\varepsilon }\cap \partial M\neq \emptyset $, then
$\chi_{\varepsilon }^{j}=1$ for $x\in P_{j}^{\varepsilon }$ and
$\chi_{\varepsilon }^{j}=0$ for $M\smallsetminus \bar{I}_{j}^{\varepsilon }$ and
$x\in \partial I_{j}^{\varepsilon }\cap (M\smallsetminus \partial M)$. So we
define
\begin{equation*}
\tilde{u}_{j}(x)=u^{+}(x)\chi _{\varepsilon }^{j}(x).
\end{equation*}
It holds $\tilde{u}_{j}\in H_{g}^{1}(M)$, hence using Sobolev inequalities
there exists a positive constant $C$ such that, for any $j$,
\begin{equation*}
\frac{|u_{j}^{+}|_{p}^{2}}{\varepsilon ^{\frac{2n}{p}}}\leq
\frac{|\tilde{u}_{j}|_{p}^{2}}{\varepsilon ^{\frac{2n}{p}}}\leq
C|||\tilde{u}_{j}|||_{\varepsilon}^{2}=
C|||\tilde{u}_{j}|||_{\varepsilon,P_{j}^{\varepsilon } }^{2}+
C|||\tilde{u}_{j}|||_{\varepsilon,I_{j}^{\varepsilon }\smallsetminus P_{j}^{\varepsilon }}^{2}.
\end{equation*}
Moreover
\begin{eqnarray*}
\int_{I_{j}^{\varepsilon }\smallsetminus P_{j}^{\varepsilon }}\left\vert
\tilde{u}_{j}\right\vert ^{2}d\mu _{g} &\leq &\int_{I_{j}^{\varepsilon
}\smallsetminus P_{j}^{\varepsilon }}\left\vert u^{+}\right\vert ^{2}d\mu
_{g}; \\
\int_{I_{j}^{\varepsilon }\smallsetminus P_{j}^{\varepsilon }}\varepsilon
^{2}\left\vert \nabla \tilde{u}_{j}\right\vert ^{2}d\mu _{g} &\leq
&\int_{I_{j}^{\varepsilon }\smallsetminus P_{j}^{\varepsilon }}(\varepsilon
^{2}\left\vert \nabla u^{+}\right\vert ^{2}+K^{2}\left\vert u^{+}\right\vert
^{2})d\mu _{g}.
\end{eqnarray*}
Hence we obtain
\begin{eqnarray*}
\sum_{j}\frac{|u_{j}^{+}|_{p}^{2}}{\varepsilon ^{\frac{2n}{p}}} &\leq
&C\sum_{j}\left\vert \left\vert \left\vert u^{+}\right\vert \right\vert
\right\vert _{\varepsilon }^{2}+C(K^{2}+1)\nu (M)\left\vert \left\vert
\left\vert u^{+}\right\vert \right\vert \right\vert _{\varepsilon }^{2}\leq
\\
&\leq &C(K^{2}+2)\nu (M)\frac{1}{\varepsilon ^{n}}\int_{M}(\varepsilon
^{2}\left\vert \nabla u\right\vert ^{2}+\left\vert u\right\vert ^{2})d\mu
_{g}.
\end{eqnarray*}
We can conclude that
\begin{equation*}
\max_{j}\left\{ \left( \frac{1}{\varepsilon ^{n}}\int_{P_{j}^{\varepsilon
}}\left\vert u^{+}\right\vert ^{p}d\mu _{g}\right) ^{\frac{p-2}{p}}\right\}
\geq \frac{1}{C(K^{2}+2)\nu (M)},
\end{equation*}
so the proof is complete.
\end{proof}

\begin{rem}
\label{ekeland}Let $\delta $ and $\varepsilon $ fixed. For any $u\in
\mathcal{N}_{\varepsilon }\cap J_{\varepsilon }^{m_{\varepsilon}+2\delta }$ there
exists $u_{\delta }\in \mathcal{N}_{\varepsilon }$ such that
\begin{equation*}
J_{\varepsilon }(u_{\delta })<J_{\varepsilon }(u),~\left\vert \left\vert
\left\vert u_{\delta }-u\right\vert \right\vert \right\vert _{\varepsilon }<4
\sqrt{\delta };
\end{equation*}
\begin{equation*}
\left\vert \left({J_\varepsilon}_{|\mathcal{N}_{\varepsilon }}\right)'
(u_{\delta})[\xi ]\right\vert <
\sqrt{\delta }\left\vert \left\vert \left\vert \xi
\right\vert \right\vert \right\vert _{\varepsilon }.
\end{equation*}

This is simply the application of Ekeland variational principle
(see \cite{def89}) to the
functional $J_{\varepsilon }$ on the manifold $\mathcal{N}_{\varepsilon }$.
\end{rem}

\begin{prop}
\label{propconc}For all $\eta \in (0,1)$ there exists a $\delta
_{0}<m_{e}^{+}$ such that for any $\delta \in (0,\delta _{0})$ for any
$\varepsilon \in (0,\varepsilon _{0}(\delta ))$ (as in Prop. \ref{propphi})
and for any function $u\in \mathcal{N}_{\varepsilon }\cap
J_{\varepsilon}^{m_{e}^{+}+\delta }$ we can find a point $q=q(u)\in \partial M$ for which
\begin{equation*}
\left( \frac{1}{2}-\frac{1}{p}\right) \frac{1}{\varepsilon ^{n}}
\int_{I_{q}(\rho ,R)}|u^{+}|^{p}d\mu _{g}\geq (1-\eta )m_{e}^{+}
\end{equation*}
where $I_{q}(\rho ,R)$ is defined in the notation paragraph.
\end{prop}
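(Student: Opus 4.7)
The plan is to argue by contradiction. Suppose the claim fails: there exist $\eta_0\in(0,1)$ and sequences $\delta_k\to 0$, $\varepsilon_k\to 0$ (with $\varepsilon_k<\varepsilon_0(\delta_k)$), and $u_k\in\mathcal{N}_{\varepsilon_k}\cap J_{\varepsilon_k}^{m_e^++\delta_k}$ such that
\begin{equation*}
\left(\tfrac12-\tfrac1p\right)\frac{1}{\varepsilon_k^n}\int_{I_q(\rho,R)}|u_k^+|^p\,d\mu_g < (1-\eta_0)m_e^+ \quad\text{for every }q\in\partial M. \qquad (\star)
\end{equation*}
Combining Remark \ref{remlimsup} with the Ekeland principle of Remark \ref{ekeland}, I would then replace $u_k$ by a nearby sequence (still called $u_k$) that additionally satisfies $(J_{\varepsilon_k})'_{|\mathcal{N}_{\varepsilon_k}}(u_k)\to 0$; this almost-critical-point property is what allows the rescaled problem to produce a genuine PDE in the limit. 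Finally, Lemma \ref{lemmagamma} furnishes a partition element $P_{j_k}^{\varepsilon_k}$, of diameter comparable to $\varepsilon_k$, on which $\tfrac{1}{\varepsilon_k^n}\int_{P_{j_k}^{\varepsilon_k}}(u_k^+)^p d\mu_g\ge\gamma>0$. Pick $x_k\in P_{j_k}^{\varepsilon_k}$; up to a subsequence, either $d_g(x_k,\partial M)/\varepsilon_k\to\infty$ (interior case) or it stays bounded (boundary case).

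In the interior case I rescale in normal coordinates at $x_k$, setting $v_k(z):=u_k(\exp_{x_k}(\varepsilon_k z))$, defined on balls $B(0,R/\varepsilon_k)$ that exhaust $\mathbb{R}^n$. Since $|||u_k|||_{\varepsilon_k}$ is bounded (as $J_{\varepsilon_k}(u_k)$ is), elliptic regularity and a diagonal extraction give $v_k\rightharpoonup v$ in $H^1_{\mathrm{loc}}(\mathbb{R}^n)$; the $L^p$-lower bound $\gamma$ survives and forces $v\not\equiv 0$; and because $g_{x_k}(\varepsilon_k\cdot)\to\Id$, the almost-criticality passes to the limit to give $-\Delta v+v=(v^+)^{p-1}$ on $\mathbb{R}^n$. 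Hence $E(v)\ge m_e=2m_e^+$, while by lower semicontinuity $\liminf_k J_{\varepsilon_k}(u_k)\ge E(v)\ge 2m_e^+$, contradicting $J_{\varepsilon_k}(u_k)\le m_e^++\delta_k\to m_e^+$.

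In the boundary case I choose $q_k\in\partial M$ realizing $d_g(x_k,\partial M)$, pass to a subsequence with $q_k\to q\in\partial M$, and rescale in Fermi coordinates at $q_k$: $v_k(z):=u_k(\psi_{q_k}^\partial(\varepsilon_k z))$. An analogous argument produces a nontrivial weak limit $v\in H^1(\mathbb{R}^n_+)$ solving the half-space Neumann problem, so $E^+(v)\ge m_e^+$, i.e.\ $\bigl(\tfrac12-\tfrac1p\bigr)\int_{\mathbb{R}^n_+}(v^+)^p\ge m_e^+$. Because $J_{\varepsilon_k}(u_k)\to m_e^+$ pins the total energy to a single half-space bubble, no $L^p$-mass can be lost to infinity or to a second concentration point, so for any $\eta\in(0,\eta_0)$ and $r$ large enough
\begin{equation*}
\bigl(\tfrac12-\tfrac1p\bigr)\tfrac{1}{\varepsilon_k^n}\int_{B_g(q_k,r\varepsilon_k)}(u_k^+)^p d\mu_g \ge (1-\eta)m_e^+
\end{equation*}
for $k$ large. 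Since $B_g(q_k,r\varepsilon_k)\subset I_{q_k}(\rho,R)$ for $k$ large, this contradicts $(\star)$ evaluated at $q=q_k$.

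The main technical obstacle is the no-dichotomy step in the boundary case: proving that the tight energy budget $m_e^++\delta_k$ actually traps the whole $L^p$-mass inside one half-space bubble near $q_k$, with nothing lost to infinity or to a second concentration point. This is essentially a concentration-compactness argument calibrated to the almost-Palais--Smale sequence delivered by Ekeland, and it crucially exploits the relation $m_e=2m_e^+$, which simultaneously rules out two interior bubbles and a half-space bubble coexisting with an interior one. The parallel compactness step in the interior case is easier only because any nontrivial rescaled limit already costs more energy than is available.
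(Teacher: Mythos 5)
Your overall architecture coincides with the paper's: contradiction, Ekeland to make $u_k$ almost critical, Lemma \ref{lemmagamma} to locate an $\varepsilon_k$-cell carrying mass $\gamma$, a blow-up dichotomy, and the relation $m_e=2m_e^+$ to kill the interior case. The interior case is handled exactly as in the paper. The problem is that the one step you yourself single out as ``the main technical obstacle'' --- trapping the mass near $q_k$ in the boundary case --- is left unexecuted, and the route you propose for it (a concentration--compactness/no-dichotomy analysis ruling out vanishing, escape to infinity, and second bubbles) is both heavier than necessary and not what actually closes the argument. The paper's resolution is short and local: since the blow-up limit $w$ is a \emph{nontrivial} weak solution of the half-space Neumann problem, it lies on $\mathcal{N}(E^+)$, so $E^+(w)=\bigl(\tfrac12-\tfrac1p\bigr)|w^+|_p^p\geq m_e^+$; conversely, the rescaled functions $u_k^+\circ\psi_{q_k}^\partial(\varepsilon_k\cdot)$ (times $|g_{q_k}|^{1/2}$, cut off to the chart) converge weakly in $L^p$ to $w^+$, and weak lower semicontinuity together with $\bigl(\tfrac12-\tfrac1p\bigr)\varepsilon_k^{-n}|u_k^+|_{p,g}^p=J_{\varepsilon_k}(u_k)\leq m_e^++\delta_k$ forces $E^+(w)\leq m_e^+$. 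Hence $E^+(w)=m_e^+$ exactly: the single bubble already saturates the budget, so there is nothing left to account for. Strong $L^p_{\mathrm{loc}}$ convergence then places mass at least $(1-\eta')m_e^+/(\tfrac12-\tfrac1p)$ in a fixed box $B_{n-1}(0,T)\times[0,T]$, whose image lies in $I_{q_k}(\rho,R)$, contradicting your $(\star)$ at $q=q_k$. You should supply this argument rather than defer to concentration--compactness; without it the boundary case is not proved.

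A second, smaller gap: Remark \ref{ekeland} produces an almost-critical point within $4\sqrt\delta$ of $u$ only for $u\in J_\varepsilon^{m_\varepsilon+2\delta}$, i.e.\ when $J_\varepsilon(u)$ is close to the \emph{infimum} $m_\varepsilon$, whereas your $u_k$ are only known to satisfy $J_{\varepsilon_k}(u_k)\leq m_e^++\delta_k$ and Remark \ref{remlimsup} gives only $\limsup_\varepsilon m_\varepsilon\leq m_e^+$. If $m_{\varepsilon_k}$ were far below $m_e^+$, Ekeland would not yield a nearby almost-critical sequence. The paper handles this by a bootstrap: it first proves the concentration estimate for $u\in J_\varepsilon^{m_e^++\delta}\cap J_\varepsilon^{m_\varepsilon+2\delta}$, deduces from it that $m_\varepsilon\geq(1-\eta)m_e^+$ and hence $m_\varepsilon\to m_e^+$, and only then concludes that $J_\varepsilon^{m_e^++\delta}\subset J_\varepsilon^{m_\varepsilon+2\delta}$ for $\varepsilon,\delta$ small, so that the restricted statement is in fact the general one. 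Your phrase ``replace $u_k$ by a nearby sequence'' silently assumes this inclusion. On the positive side, your dichotomy on $d_g(x_k,\partial M)/\varepsilon_k$ (bounded versus unbounded) is cleaner than the paper's dichotomy on whether $P_k$ meets $\partial M$, since it makes explicit that a cell disjoint from $\partial M$ but at distance $O(\varepsilon_k)$ from it still produces a half-space bubble.
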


\begin{proof}
We prove this property for $u\in \mathcal{N}_{\varepsilon }\cap
J_{\varepsilon}^{m_{e}^{+}+\delta }\cap 
J_{\varepsilon}^{m_\varepsilon+2\delta }$. From the thesis for these functions 
follows that 
\begin{equation}\label{eqgeq}
m_\varepsilon\geq (1-\eta)m_e^+.
\end{equation}
By (\ref{eqgeq}) and by Remark \ref{remlimsup} we have that 
\begin{equation}
\lim_{\varepsilon\rightarrow0}m_\varepsilon=m_e^+.
\end{equation}
Thus $J_{\varepsilon}^{m_{e}^{+}+\delta }\subset 
J_{\varepsilon}^{m_\varepsilon+2\delta }$ for $\varepsilon,\delta$ 
small enough, and the general case is proved.

The proof is by contradiction. Hence we assume that there
exists $\eta \in (0,1)$, two sequences of vanishing real numbers $\left\{
\delta _{k}\right\} _{k}$ and $\left\{ \varepsilon _{k}\right\} _{k}$ and a
sequence of functions $\left\{ u_{k}\right\} _{k}\subset
\mathcal{N}_{\varepsilon _{k}}\cap 
J_{\varepsilon _{k}}^{m_{e}^++\delta _{k}}
\cap J_{\varepsilon _{k}}^{m_{\varepsilon_k}+2\delta _{k}}$
such that,
for any $q\in \partial M$ it holds
\begin{equation}
\left( \frac{1}{2}-\frac{1}{p}\right) \frac{1}{\varepsilon _{k}^{n}}
\int_{I_{q}(\rho ,R)}|u_{k}^{+}|^{p}d\mu _{g}<(1-\eta )m_{e}^{+}.  \label{uk}
\end{equation}
By Remark \ref{ekeland} and by definition of $\mathcal{N}_{\varepsilon _{k}}$
we can assume
\begin{equation*}
J_{\varepsilon _{k}}^{\prime }(u_{k})[\varphi ]\leq \sqrt{\delta _{k}}
\left\vert \left\vert \left\vert \varphi \right\vert \right\vert \right\vert
_{\varepsilon _{k}}\text{ for all }\varphi \in H_{g}^{1}(M).
\end{equation*}
By Lemma \ref{lemmagamma} there exists a set $P_{k}^{\varepsilon _{k}}\in
\mathcal{P}_{\varepsilon _{k}}$ such that
\begin{equation*}
\frac{1}{\varepsilon _{k}^{n}}\int_{P_{k}^{\varepsilon
_{k}}}|u_{k}^{+}|^{p}d\mu _{g}\geq \gamma >0.
\end{equation*}
we have to examine two cases: either there exists a subsequence
$P_{i_{k}}^{\varepsilon _{i_{k}}}$ such that $P_{i_{k}}^{\varepsilon
_{i_{k}}}\cap \partial M\neq \emptyset $, or there exists a subsequence
$P_{i_{k}}^{\varepsilon _{i_{k}}}$ such that $P_{i_{k}}^{\varepsilon
_{i_{k}}}\cap \partial M=\emptyset $. For simplicity we write simply $P_{k}$
for $P_{i_{k}}^{\varepsilon _{i_{k}}}$.

\textbf{The case}$\ P_{k}\cap \partial M\neq \emptyset $. We choose a point
$q_{k} $ interior to $P_{k}\cap \partial M$. We have the Fermi coordinates
$\psi _{q_{k}}^{\partial }:B_{n-1}(0,R)\times [ 0,\rho ]\rightarrow M$,
$\psi _{q_{k}}^{\partial }(\bar{y},y_{n})=(\bar{x},x_{n})=x$. We consider
the function $w_{k}:\mathbb{R}_{+}^{n}\rightarrow \mathbb{R}$ defined by

\begin{equation*}
u_{k}(\psi _{q_{k}}^{\partial }(\bar{y},y_{n}))\chi _{R}(|\bar{y}|)\chi
_{\rho }(y_{n})=u_{k}(\psi _{q_{k}}^{\partial }
(\varepsilon _{k}\bar{z},\varepsilon z_{n}))\chi _{R}(|\varepsilon _{k}\bar{z}|)
\chi _{\rho}(\varepsilon z_{n})=w_{k}(\bar{z},z_{n}).
\end{equation*}
It is clear that $w_{k}\in H^{1}(\mathbb{R}_{+}^{n})$ with
$w_{k}(\bar{z},z_{n})=0$ when $|\bar{z}|=0,R/\varepsilon _{k}$ or
$z_{n}=\rho /\varepsilon_{k}$. We now show some properties of the function $w_{k}$.

\textsc{Step1:} {\em There exists a $w\in H^{1}(\mathbb{R}_{+}^{n})$
such that the sequence $w_{k}$ converges weakly in
$H^{1}(\mathbb{R}_{+}^{n})$ and strongly in
$L_{\text{loc}}^{p}(\mathbb{R}_{+}^{n})$}

We have the following inequality
\begin{eqnarray}
&& \frac{1}{\varepsilon _{k}^{n}}\int_{M}|u_{k}|^{2}d\mu _{g} \geq  \notag \\
&&\geq \frac{1}{\varepsilon _{k}^{n}}\int_{B_{n-1}(0,R)\times [ 0,\rho
]}|u_{k}\left( \psi _{q_{k}}^{\partial }(y)\right) |^{2}\chi _{R}^{2}
\left( |\bar{y}|\right) \chi _{\rho }^{2}\left( (y_{n})\right) \left\vert
g_{q_{k}}(y)\right\vert ^{1/2}dy=  \label{equk1} \\
&&=\int_{B_{n-1}(0,R/\varepsilon _{k})\times [ 0,\rho /\varepsilon
_{k}]}|w_{k}|^{2}\left\vert g_{q_{k}}(\varepsilon z)\right\vert ^{1/2}dz\geq
c|w_{k}|_{L^{2}(\mathbb{R}_{+}^{n})}^{2}.  \notag
\end{eqnarray}
Where $z=\varepsilon y$ and $c>0$ is a
suitable constant.

For simplicity we set $\tilde{\chi}(y)=\chi _{R}(\bar{y})\chi _{\rho
}(y_{n}) $ We have
\begin{eqnarray*}
&&\int_{\mathbb{R}_{+}^{n}}\left\vert \nabla w_{k}\right\vert ^{2}dx \leq \\
&\leq &2\int_{\mathbb{R}_{+}^{n}}\sum_{i}\left( \frac{\partial u_{k}}
{\partial z_{i}}(\varepsilon _{k}z)\right) ^{2}\tilde{\chi}^{2}(\varepsilon
_{k}z)dz+2\int_{\mathbb{R}_{+}^{n}}\sum_{i}u_{k}^{2}(\varepsilon
_{k}z)\left( \frac{\partial \tilde{\chi}}{\partial z_{i}}(\varepsilon
_{k}z)\right) ^{2}dz \\
&=&I_{1}+I_{2}
\end{eqnarray*}
By definition of $\tilde{\chi}$ and $w_{k}$ we have

\begin{eqnarray}
&&\frac{\varepsilon _{k}^{2}}{\varepsilon _{k}^{n}}\int_{M}\left\vert \nabla
_{g}u_{k}\right\vert ^{2}d\mu _{g}\geq \frac{\varepsilon _{k}^{2}}
{\varepsilon _{k}^{n}}\int_{\psi _{q_{k}}^{\partial }\left(
B_{n-1}(0,R)\times [ 0,\rho ]\right) }\left\vert \nabla
_{g}u_{k}\right\vert ^{2}d\mu _{g}=  \label{equk2} \\
&=&\int_{B_{n-1}(0,R/\varepsilon _{k})
\times [ 0,\rho /\varepsilon_{k}]}\sum_{ij}g_{q_{k}}^{ij}
\frac{\partial u_{k}}{\partial z_{i}}
(\varepsilon _{k}z)\frac{\partial u_{k}}{\partial z_{j}}
(\varepsilon_{k}z)\left\vert g_{q_{k}}
(\varepsilon z)\right\vert ^{1/2}dz\geq cI_{1}.
\notag
\end{eqnarray}
where $c$ depends only on the Riemannian manifold $M$. In a similar way we
have
\begin{equation}
I_{2}\leq \frac{c\varepsilon _{k}^{2}}{R^{2}\rho ^{2}\varepsilon _{k}^{n}}
\int_{M}\left\vert u_{k}\right\vert ^{2}d\mu _{g}.  \label{equk3}
\end{equation}
By (\ref{equk1}), (\ref{equk2}) and (\ref{equk3}) we get that $\left\vert
\left\vert w_{k}\right\vert \right\vert _{H^{1}(\mathbb{R}_{+}^{n})}$ is
bounded. Then we have the claim.

\textsc{Step2:} {\em The limit function $w$ is a weak solution of}
\begin{equation*}
\left\{
\begin{array}{cc}
-\Delta w+w=(w^{+})^{p-1} & \text{in }\mathbb{R}_{+}^{n}; \\
\frac{\partial w}{\partial \nu }=0 & \text{for }y=(\bar{y},0);
\end{array}
\right.
\end{equation*}

Firstly for any $\varphi \in C_{0}^{\infty }(\mathbb{R}_{+}^{n})$ we define on
the manifold $M$ the function $\tilde{\varphi}_{k}(x):=
\varphi \left( \frac{1}{\varepsilon _{k}}
\left( \psi _{q_{k}}^{\partial }\right)^{-1}(x)\right) $. We have that
\begin{eqnarray}
\nonumber
|||\tilde{\varphi}_{k}|||_{\varepsilon _{k}}&=&
\int_{\mathbb{R}_{+}^{n}}
\left[\sum_{ij}g_{q_{k}}^{ij}(\varepsilon _{k}z)
\frac{\partial \varphi}{\partial z_{i}}(z)
\frac{\partial \varphi}{\partial z_{j}}(z)+
|\varphi(z)|^{2}\right] |g_{q_{k}}(\varepsilon _{k}z)|^{1/2}dz\\
&\leq& c||\varphi||_{H^{1}}^{2}(\R^n_+)\label{normaphik}
\end{eqnarray}
where $c$ depends only on $M$.

We set
\begin{equation*}
F_{\varepsilon _{k}}(v)=\int_{\R^n_+}\left[
\sum_{ij}\frac{g_{q_{k}}^{ij}(\varepsilon _{k}z)}{2}
\frac{\partial v}{\partial z_{i}}
(z)\frac{\partial v}{\partial z_j}(z)+\frac{v^{2}(z)}{2}-
\frac{|w_{k}^{+}(z)|^{p}}{p}\right] |g_{q_{k}}(\varepsilon_{k}z)|^{1/2}dz
\end{equation*}
so
\begin{eqnarray*}
&&|F_{\varepsilon _{k}}^{\prime }(w_{k})[\varphi ]|=\\
&&=\int_{\text{supp}\varphi }
\left[ \sum_{ij}g_{q_{k}}^{ij}(\varepsilon _{k}z)\frac{\partial w_{k}}
{\partial z_{i}}(z)\frac{\partial \varphi }{\partial z_j}(z)+
\left(w_{k}(z)-(w_{k}^{+}(z))^{p-1}\right)\varphi (z)\right] \left\vert
g_{q_{k}}(\varepsilon _{k}z)\right\vert ^{1/2}.
\end{eqnarray*}
It is easy to verify that for $k=k(\varphi )$ large enough
\begin{equation*}
|F_{\varepsilon _{k}}^{\prime }(w_{k})[\varphi ]|=|J_{\varepsilon
_{k}}^{\prime }(u_{k})[\tilde{\varphi}_{k}]|.
\end{equation*}
 By Ekeland principle (Remark \ref{ekeland})
and by (\ref{normaphik}) we have that
\begin{equation*}
|F_{\varepsilon _{k}}^{\prime }(w_{k})[\varphi ]|=
|J_{\varepsilon _{k}}^{\prime }(u_{k})[\tilde{\varphi}_{k}]|\leq
\sqrt{\delta _{k}}
\left\vert \left\vert \left\vert \tilde{\varphi}_{k}\right\vert \right\vert
\right\vert _{\varepsilon _{k}}\rightarrow 0\text{ as }k\rightarrow \infty .
\end{equation*}

At this point to get the claim it is sufficient to show that
\begin{equation}
F_{\varepsilon _{k}}^{\prime }(w_{k})[\varphi ]\rightarrow \left(
E^{+}\right) ^{\prime }(w)[\varphi ].  \label{Fprimo}
\end{equation}
In fact we have
\begin{equation*}
\left\vert F_{\varepsilon _{k}}^{\prime }(w_{k})[\varphi ]-\left(
E^{+}\right) ^{\prime }(w)[\varphi ]\right\vert \leq I_{1}+I_{2}+I_{3},
\end{equation*}
where
\begin{eqnarray*}
I_{1} &=&\int_{\text{supp}\varphi }\left(
\sum_{ij}g_{q_{k}}^{ij}(\varepsilon _{k}z)\frac{\partial w_{k}}{\partial
z_{i}}(z)\frac{\partial \varphi }{\partial z_{j}}(z)\left\vert
g_{q_{k}}(\varepsilon _{k}z)\right\vert ^{1/2}-\delta _{ij}\frac{\partial w}
{\partial z_{i}}(z)\frac{\partial \varphi }{\partial z_{j}}(z)\right) dz; \\
I_{2} &=&\int_{\text{supp}\varphi }|\varphi (z)|\left\vert
g_{q_{k}}(\varepsilon _{k}z)\right\vert ^{1/2}|w_{k}(z)-w(z)|dz; \\
I_{3} &=&\int_{\text{supp}\varphi }|\varphi (z)|\left\vert
g_{q_{k}}(\varepsilon _{k}z)\right\vert ^{1/2}|\left( w_{k}^{+}(z)\right)
^{p-1}-(w(z))^{p-1}|dz.
\end{eqnarray*}
Because supp$\varphi $ is a compact set,
$|g_{q_{k}}^{ij}(\varepsilon _{k}z)-\delta _{ij}| \leq
c\varepsilon_{k}\left\vert z\right\vert ^{2}$
and by Step 1 we get (\ref{Fprimo}).

\textsc{Step3:} {\em The limit function $w$ is a least energy solution of}
\begin{equation*}
\left\{
\begin{array}{cc}
-\Delta w+w=(w^{+})^{p-1} & \text{in }\mathbb{R}_{+}^{n}; \\
\frac{\partial w}{\partial \nu }=0 & \text{for }y=(\bar{y},0);
\end{array}
\right.
\end{equation*}

We will show that $w\neq 0$. We are in the case $P_{k}\cap \partial M\neq
\emptyset $. We can choose $T>0$ such that
\begin{equation*}
P_{k}\subset I_{q_{k}}(\varepsilon _{k}T,\varepsilon _{k}T)\text{
for }k\text{ large enough.}
\end{equation*}
where $q_k$ is a point in $P_{k}$.
By definition $w_{k}$ and by Lemma \ref{lemmagamma} there exist
a $q_k$ such that, for $k$ large
enough
\begin{eqnarray*}
\left\vert \left\vert w_{k}^{+}\right\vert \right\vert
_{L^{p}(B_{n-1}(0,T)\times [ 0,T])} &=&\int_{B_{n-1}(0,T)\times
[ 0,T]}\left\vert \chi _{R}(\varepsilon _{k}|\bar{z}|)\chi _{\rho
}(\varepsilon _{k}z_{n})u_{k}^{+}\left( \psi _{q_{k}}^{\partial
}(\varepsilon _{k}z)\right) \right\vert ^{p}dz= \\
&=&\frac{1}{\varepsilon _{k}^{n}}\int_{B_{n-1}(0,\varepsilon _{k}T)\times
[0,\varepsilon _{k}T]}\left\vert u_{k}^{+}\left( \psi
_{q_{k}}^{\partial }(y)\right) \right\vert ^{p}dy\geq  \\
&\geq &\frac{c}{\varepsilon _{k}^{n}}\int_{B_{n-1}(0,\varepsilon
_{k}T)\times [ 0,\varepsilon _{k}T]}\left\vert u_{k}^{+}\left( \psi
_{q_{k}}^{\partial }(y)\right) \right\vert ^{p}\left\vert
g_{q_{k}}(y)\right\vert ^{1/2}dy= \\
&\geq &\frac{c}{\varepsilon _{k}^{n}}\int_{I_{q_{k}}(\varepsilon
_{k}T,\varepsilon _{k}T)}\left\vert u_{k}^{+}\right\vert ^{p}d\mu _{g}\geq
c\gamma >0.
\end{eqnarray*}
Since $w_{k}$ converge strongly to $w$ in $L^{p}(B_{n-1}(0,T)\times
[0,T])$, we have $w\neq 0$.

We now show that
\begin{equation*}
\left( \frac{1}{2}-\frac{1}{p}\right) |w^{+}|_{p}^{p}\leq m_{e}^{+}.
\end{equation*}
Since $u_{k}\in \mathcal{N}_{\varepsilon _{k}}\cap J_{\varepsilon
_{k}}^{m_{e}^++\delta _{k}}$, it holds
\begin{eqnarray*}
\frac{m_{e}^++\delta _{k}}{\frac{1}{2}-\frac{1}{p}} &\geq &\frac{1}{\frac{1}{2}
-\frac{1}{p}}J_{\varepsilon _{k}}(u_{k})=\frac{1}{\varepsilon _{k}^{n}}
\int_{M}|u_{k}^{+}|^{p}d\mu _{g}\geq  \\
&\geq &\frac{1}{\varepsilon _{k}^{n}}\int_{B_{n-1}(q_{k},R/2)\times
[0,\rho /2]}|u_{k}^{+}(\psi _{q_{k}}^{\partial
}(y))|^{p}|g_{q_{k}}(y)|^{1/2}dy= \\
&=&\int_{B_{n-1}(q_{k},R/2\varepsilon _{k})\times [ 0,\rho
/2\varepsilon _{k}]}|u_{k}^{+}(\psi _{q_{k}}^{\partial }(\varepsilon
_{k}z))|^{p}|g_{q_{k}}(\varepsilon _{k}z)|^{1/2}dz.
\end{eqnarray*}
We set
\begin{equation*}
f_k(z)=u_{k}^{+}(\psi _{q_{k}}^{\partial }(\varepsilon
_{k}z))|g_{q_{k}}(\varepsilon _{k}z)|^{1/2}\zeta _{k}(z)
\end{equation*}
where $\zeta _{k}$ is the characteristic function of the set
$B_{n-1}(q_{k},R/\varepsilon _{k})\times [0,\rho /\varepsilon _{k}]$.
The sequence of function $f_{k}$ is bounded in $L^{p}(\mathbb{R}_{+}^{n})$,
hence, up to subsequence, converges weakly to some
$f\in L^{p}(\mathbb{R}_{+}^{n})$. We get, for any $\varphi \in C_{0}^{\infty }(\mathbb{R}_{+}^{n})$,
\begin{equation*}
\int_{\mathbb{R}_{+}^{n}}f_{k}(z)\varphi (z)dz\rightarrow
\int_{\mathbb{R}_{+}^{n}}w^{+}(z)\varphi (z)dz\text{ as }k\rightarrow \infty .
\end{equation*}
Hence $f$ is equal to the positive function $w^+=w\neq 0$. Moreover we have
\begin{equation*}
\left( \frac{1}{2}-\frac{1}{p}\right) |w|_{p}^{p}\leq \liminf_{k\rightarrow
\infty }\left( \frac{1}{2}-\frac{1}{p}\right)
\int_{\mathbb{R}_{+}^{n}}|f_{k}(z)|^{p}dz\leq m_{e}^{+}.
\end{equation*}
Concluding $w\in \mathcal{N}^{+}$ and $E^{+}(w)\leq m_{e}^{+}$, so $w$ is a
least energy solution.

\textsc{Conclusion of the first case:} {\em At this point we can show that,
for any $T>0$, it holds, for $k$ large enough,}
\begin{equation*}
\left( \frac{1}{2}-\frac{1}{p}\right) |w_{k}|_{L^{p}(B_{n-1}(0,T)\times
[ 0,T])}^{p}\leq \frac{2}{3}(1-\eta )m_{e}^{+}.
\end{equation*}
In fact we recall that for any
$q\in \partial M$ the Riemannian metric $g_{q}(y)$
read through the Fermi coordinates is such that $g_{q}(\varepsilon
_{k}z)=1+O(\varepsilon _{k}|z|)$. Hence fixed $T$
\begin{equation*}
|g_{q}(\varepsilon _{k}z)|^{-1/2}\leq \frac{2}{3}\text{ for }k\text{ big
enough and for }z\in B_{n-1}(0,T)\times [ 0,T]
\end{equation*}
By this fact, using the definition of $w_{k}$ and (\ref{uk}) we have, for $k$
large,
\begin{eqnarray}
\nonumber
&&|w_{k}^{+}|_{L^{p}(B_{n-1}(0,T)\times [0,T])}^{p} \leq
\int_{B_{n-1}(0,T)\times [ 0,T]}|u_{k}^{+}(\psi _{q_{k}}^{\partial
}(\varepsilon _{k}z))|^{p}|g_{q_{k}}(\varepsilon _{k}z)|^{1/2}\frac{2}{3}dz=
\\
&&=\frac{2}{3}\frac{1}{\varepsilon _{k}^{n}}\int_{I(q_{k},\varepsilon
_{k}T,\varepsilon _{k}T)}|u_{k}^{+}|^{p}d\mu _{g}\leq \frac{2}{3}(1-\eta )
\frac{m_{e}^{+}}{\left( \frac{1}{2}-\frac{1}{p}\right) }.\label{eq17}
\end{eqnarray}
On the other side by Step 3 we have that
\begin{equation*}
E^{+}(w)=\left( \frac{1}{2}-\frac{1}{p}\right) |w|_{p}^{p}=m_{e}^{+}.
\end{equation*}
Now, by Step 1 there exists $T>0$ such that, for $k$ big enough we have
\begin{equation}\label{eq18}
\frac{2}{3}(1-\eta )\frac{m_{e}^{+}}{\left( \frac{1}{2}-\frac{1}{p}\right) }
<|w_{k}^{+}|_{L^{p}(B_{n-1}(0,T)\times [ 0,T])}^{p}.
\end{equation}
By (\ref{eq17}) and by (\ref{eq18}) we have a contradiction.

\textbf{The case }$P_{k}^{\varepsilon }\cap \partial M=\emptyset $. we
choose a point $q_{k}$ interior to $P_{k}^{\varepsilon }$ and we consider
the normal coordinates at $q_{k}\,$. We set $w_{k}(z)$ as
\begin{equation*}
u_{k}(x)\chi _{R}(\exp_{q_{k}}^{-1}(x))=
u_{k}(\exp_{q_{k}} (y))\chi_{R}(y)=
u_{k}(\exp_{q_{k}} (\varepsilon _{k}z))\chi _{R}(\varepsilon _{k}z)=
w_{k}(z).
\end{equation*}
Then $w_{k}\in H_{0}^{1}(B(0,R/\varepsilon _{k}))\subset
H^{1}(\mathbb{R}^{n})$. Arguing as in the previous step, we can establish
some properties of the function $w_{k}$. We omit the proof of single steps.

\textsc{Step 1:} $w_{k}$ is bounded in $H^{1}$ and converge to some $w\in
H^{1}$ weakly $L_{\text{loc}}^{p}$\ in and strongly in $H^{1}$.

\textsc{Step 2:} $w$ is a weak solution of $-\Delta w+w=(w^{+})^{p-1}$ in
$\mathbb{R}^{n}$

\textsc{Step 3:} $w$ is strictly positive, and it is a least energy solution
of $-\Delta w+w=|w|^{p-1}w$, that is
\begin{equation}\label{eq19}
\left( \frac{1}{2}-\frac{1}{p}\right) |w|_{p}^{p}=E(w)=m_{e}=2m_e^+.
\end{equation}
\textsc{Conclusion of the second case:} By (\ref{eq19}) and (\ref{uk})
we have the
contradiction

This concludes the proof.
\end{proof}

\begin{rem}
We point out that in the proof of Proposition \ref{propconc}, by 
Remark \ref{remlimsup} and by (\ref{eqgeq}) we showed that
\begin{equation*}
\lim_{\varepsilon \rightarrow 0}m_{\varepsilon }=m_{e}^{+}.
\end{equation*}
\end{rem}

\section{The map $\beta $}

For any $u\in \mathcal{N}_{\varepsilon }$ we can define its center of mass
as a point $\beta (u)\in \mathbb{R}^{N}$ by
\begin{equation}
\beta (u)=\frac{\displaystyle\int_{M}x|u^{+}(x)|^{p}d\mu _{g}}{\displaystyle
\int_{M}|u^{+}(x)|^{p}d\mu _{g}}.
\end{equation}

The application is well defined on $\mathcal{N}_{\varepsilon }$, since $u\in
\mathcal{N}_{\varepsilon }$ implies $u^{+}\neq 0$. In the following we will
show that if $u\in \mathcal{N}_{\varepsilon }\cap J^{m_{e}^{+}+\delta }$
then $\beta (u)\in (\partial M)_{3\rho }$,using the concentration property
(Prop. \ref{propconc})
of the function $u\in \mathcal{N}_{\varepsilon }\cap J^{m_{e}^{+}+\delta }$
if $\varepsilon $ and $\delta $ are sufficiently small.

\begin{prop}
\label{propbar1}For any
$u\in \mathcal{N}_{\varepsilon }\cap J^{m_{e}^{+}+\delta }$, with $\varepsilon $
and $\delta $ small enough, it holds
\begin{equation*}
\beta (u)\in (\partial M)_{3\rho }
\end{equation*}
\end{prop}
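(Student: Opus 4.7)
My plan is to exploit Proposition \ref{propconc} (the concentration result) together with the bound $J_\varepsilon(u)\leq m_e^+ + \delta$ to show that most of the $L^p$-mass of $u^+$ lies in a subset $I_{q}(\rho,R)$ of the tubular neighbourhood $(\partial M)_\rho$, while the remaining mass is arbitrarily small provided $\eta$ and $\delta$ are chosen small. Since the barycenter is a weighted average of points in $M\subset\mathbb{R}^N$, the dominant contribution will force $\beta(u)$ to be close, in Euclidean distance, to the concentration point $q\in\partial M$.

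More concretely, fix $\eta\in(0,1)$ small (to be chosen) and take $\delta\in(0,\delta_0(\eta))$ and $\varepsilon\in(0,\varepsilon_0(\delta))$ as in Proposition \ref{propconc}, so that there exists $q=q(u)\in\partial M$ with
\begin{equation*}
\Bigl(\tfrac{1}{2}-\tfrac{1}{p}\Bigr)\frac{1}{\varepsilon^n}\int_{I_q(\rho,R)}|u^+|^p\,d\mu_g\ \geq\ (1-\eta)\,m_e^+.
\end{equation*}
Set $A=\int_{I_q}|u^+|^p d\mu_g$ and $B=\int_{M\setminus I_q}|u^+|^p d\mu_g$. From $u\in\mathcal{N}_\varepsilon$ one gets $(\tfrac12-\tfrac1p)\frac{1}{\varepsilon^n}(A+B)=J_\varepsilon(u)\leq m_e^++\delta$, hence combined with the concentration inequality
\begin{equation*}
\frac{B}{A}\ \leq\ \frac{\eta\, m_e^+ +\delta}{(1-\eta)\,m_e^+},
\end{equation*}
which tends to $0$ as $\eta,\delta\to 0$.

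Now I would write $\beta(u)-q=\frac{1}{A+B}\int_M(x-q)|u^+|^p\,d\mu_g$ and split according to $I_q$ and $M\setminus I_q$. For $x\in I_q$, using that the embedding is Lipschitz with Euclidean distance bounded by geodesic distance and the triangle inequality via the projection $\bar x$ on $\partial M$, one obtains $|x-q|\leq d_g(x,\bar x)+d_g^\partial(\bar x,q)<\rho+R<2\rho$. For $x\in M\setminus I_q$ the bound is the crude $|x-q|\leq\mathrm{diam}(M)$. Therefore
\begin{equation*}
|\beta(u)-q|\ \leq\ 2\rho\,\frac{A}{A+B}\,+\,\mathrm{diam}(M)\,\frac{B}{A+B}\ \leq\ 2\rho\,+\,\mathrm{diam}(M)\,\frac{B}{A}.
\end{equation*}
Choosing $\eta$ and then $\delta$ small enough so that $\mathrm{diam}(M)\cdot\frac{\eta\, m_e^+ +\delta}{(1-\eta)\,m_e^+}<\rho$, we conclude $|\beta(u)-q|<3\rho$, which since $q\in\partial M$ gives $\beta(u)\in(\partial M)_{3\rho}$.

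The only non-trivial ingredient is Proposition \ref{propconc}, which is already proved; the present argument is essentially a bookkeeping exercise. The main point of care is to make sure the distance in the target $(\partial M)_{3\rho}$ is the Euclidean one in $\mathbb{R}^N$ (as in the definition) and to use $|x-y|\leq d_g(x,y)$ (a consequence of the isometric Nash embedding) when passing from the intrinsic estimates on $I_q$ to the Euclidean estimate on $\beta(u)-q$. No deep obstacle is expected.
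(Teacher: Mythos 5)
Your proposal is correct and follows essentially the same route as the paper: invoke Proposition \ref{propconc} to locate a point $q\in\partial M$ capturing at least $(1-\eta)m_e^+$ of the (rescaled) $L^p$-mass, use the Nehari identity and the bound $J_\varepsilon(u)<m_e^++\delta$ to control the mass outside $I_q(\rho,R)$, and split the barycenter integral to get $|\beta(u)-q|\leq 2\rho+\mathrm{diam}(M)\cdot o(1)$. Your ratio $B/(A+B)$ is exactly the paper's factor $1-\frac{1-\eta}{1+\delta/m_e^+}$, so the two arguments coincide.
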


\begin{proof}
Since $m_{\varepsilon }\rightarrow m_{e}^{+}$ and by
Proposition \ref{propconc} we get that for any
$u\in \mathcal{N}_{\varepsilon }\cap J^{m_{e}^{+}+\delta }$
there exists $q\in \partial M$ such that
\begin{equation}
(1-\eta )m_{e}^{+}\leq\left( \frac{1}{2}-\frac{1}{p}\right)
\frac{1}{\varepsilon ^{n}}|u^{+}|_{L^{p}\left( I_{q}(\rho ,R)\right) }^{p}.
\label{bar1}
\end{equation}
Since $u\in \mathcal{N}_{\varepsilon }\cap J^{m_{e}^{+}+\delta }$ we have
\begin{equation}
\left( \frac{1}{2}-\frac{1}{p}\right) \frac{1}{\varepsilon ^{n}}
|u^{+}|_{p,g}^{p}<m_{e}^{+}+\delta .  \label{bar2}
\end{equation}
Then by (\ref{bar1}) and (\ref{bar2}) we get
\begin{equation*}
\int_{I_{q}(\rho ,R)}\frac{|u^{+}|^{p}}{|u^{+}|_{p,g}^{p}}d\mu _{g}\geq
\frac{1-\eta }{1+\frac{\delta }{m_{e}^{+}}}.
\end{equation*}
By definition of $\beta $ we have
\begin{eqnarray*}
|\beta (u)-q| &\leq &\left\vert \int_{I_{q}(\rho ,R)}(x-q)\frac{|u^{+}|^{p}}
{|u^{+}|_{p,g}^{p}}d\mu _{g}\right\vert +\left\vert \int_{M\smallsetminus
I_{q}(\rho ,R)}(x-q)\frac{|u^{+}|^{p}}{|u^{+}|_{p,g}^{p}}d\mu
_{g}\right\vert \leq  \\
&\leq &2\rho +D\left( 1-\frac{1-\eta }{1+\frac{\delta }{m_{e}^{+}}}\right) ,
\end{eqnarray*}
where $D$ is the diameter of the manifold $M$ as a subset of $\mathbb{R}^{n}$.
Choosing $\eta $ and $\delta $ small enough we get the claim.
\end{proof}

\begin{prop}
The composition
\begin{equation*}
\beta \circ \Phi _{\varepsilon }:\partial M\rightarrow (\partial M)_{3\rho}
\subset \mathbb{R}^{n}
\end{equation*}
is well defined and homotopic to the identity of $\partial M$.
\end{prop}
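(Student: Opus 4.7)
The plan is to reduce the statement to two things: first, that $\beta\circ\Phi_\varepsilon$ actually takes values in $(\partial M)_{3\rho}$ (well-definedness), and second, that it approximates the identity in $\mathbb{R}^N$ uniformly on $\partial M$ (so that the obvious straight-line homotopy works).

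For well-definedness I combine Proposition \ref{propphi}, which gives $\Phi_\varepsilon(q)\in\mathcal{N}_\varepsilon\cap J_\varepsilon^{m_e^++\delta}$ for $\varepsilon<\varepsilon_0(\delta)$, with Proposition \ref{propbar1}, which guarantees that the barycenter of any such function lies in $(\partial M)_{3\rho}$, provided $\varepsilon$ and $\delta$ are small enough. So for such $\varepsilon,\delta$ the composition $\beta\circ\Phi_\varepsilon$ is defined as a continuous map $\partial M\to(\partial M)_{3\rho}$ (the continuity of $\Phi_\varepsilon$ is already contained in Proposition \ref{propphi} and $\beta$ is manifestly continuous on $\mathcal{N}_\varepsilon$).

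The heart of the proof is the uniform estimate
\begin{equation*}
\lim_{\varepsilon\to 0}\,\sup_{q\in\partial M}\bigl|\beta(\Phi_\varepsilon(q))-q\bigr|_{\mathbb{R}^N}=0.
\end{equation*}
Since the scalar $t_\varepsilon(Z_{\varepsilon,q})$ cancels in the numerator and denominator of $\beta$, I only need to compute $\beta$ of $Z_{\varepsilon,q}$ itself. Passing to Fermi coordinates at $q$ and performing the rescaling $y=\varepsilon z$ exactly as in the proof of \eqref{eql2}--\eqref{eqlp}, I obtain
\begin{equation*}
\beta(\Phi_\varepsilon(q))=\frac{\displaystyle\int_{\mathbb{R}^n_+}\psi_q^\partial(\varepsilon z)\,V^p(z)\,\chi_{R/\varepsilon}^2(|\bar z|)\chi_{\rho/\varepsilon}^2(z_n)\,|g_q(\varepsilon z)|^{1/2}\,dz}{\displaystyle\int_{\mathbb{R}^n_+}V^p(z)\,\chi_{R/\varepsilon}^2(|\bar z|)\chi_{\rho/\varepsilon}^2(z_n)\,|g_q(\varepsilon z)|^{1/2}\,dz}.
\end{equation*}
Using $\psi_q^\partial(\varepsilon z)=q+O(\varepsilon|z|)$, $|g_q(\varepsilon z)|^{1/2}=1+O(\varepsilon|z|)$, the exponential decay of $V$ (so that $\int|z|V^p<\infty$), and the compactness of $\partial M$ (which makes every constant uniform in $q$), I conclude that both the numerator converges to $q\int_{\mathbb{R}^n_+}V^p$ and the denominator to $\int_{\mathbb{R}^n_+}V^p$, uniformly in $q\in\partial M$. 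Thus $\beta(\Phi_\varepsilon(q))=q+O(\varepsilon)$ uniformly. This uniform estimate is the main technical obstacle, but it is essentially the same change-of-variables argument already used for \eqref{eql2}, so it is really bookkeeping rather than a new idea.

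Having the uniform bound, I build the homotopy in the ambient Euclidean space. Define
\begin{equation*}
H:[0,1]\times\partial M\longrightarrow (\partial M)_{3\rho},\qquad H(t,q):=(1-t)\,q+t\,\beta(\Phi_\varepsilon(q)).
\end{equation*}
Shrinking $\varepsilon$ so that $\sup_{q}|\beta(\Phi_\varepsilon(q))-q|<3\rho$, the segment from $q$ to $\beta(\Phi_\varepsilon(q))$ lies entirely in $(\partial M)_{3\rho}$: indeed $\mathrm{dist}(H(t,q),\partial M)\leq |H(t,q)-q|=t|\beta(\Phi_\varepsilon(q))-q|<3\rho$. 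Hence $H$ is a well-defined continuous homotopy between $H(0,\cdot)$, which is the inclusion $\partial M\hookrightarrow(\partial M)_{3\rho}$ (the ``identity of $\partial M$'' in the required sense), and $H(1,\cdot)=\beta\circ\Phi_\varepsilon$. This completes the proof.
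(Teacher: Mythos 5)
Your proposal is correct and follows essentially the same route as the paper: well-definedness from Propositions \ref{propphi} and \ref{propbar1}, then the rescaled Fermi-coordinate computation of $\beta(\Phi_\varepsilon(q))$ using the exponential decay of $V$ to get $|\beta(\Phi_\varepsilon(q))-q|\leq c\varepsilon$ uniformly in $q$, followed by the straight-line homotopy in $\mathbb{R}^N$ (which the paper leaves implicit but you spell out, correctly interpreting ``identity'' as the inclusion $\partial M\hookrightarrow(\partial M)_{3\rho}$). The only blemish is a typo: the cutoffs in your barycenter integrand should appear to the power $p$, not $2$, which does not affect the argument.
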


\begin{proof}
By Proposition \ref{propbar1} and \ref{propphi} the map
$\beta \circ \Phi _{\varepsilon }:
\partial M\rightarrow (\partial M)_{\rho(\partial M)}$
is well defined.

To prove that $\beta \circ \Phi _{\varepsilon }:\partial M\rightarrow
(\partial M)_{3\rho }$ is homotopic to the identity it is enough to evaluate
the map
\begin{eqnarray*}
\beta (\Phi _{\varepsilon }(q))-q &=&\frac{\int_{B_{n-1}(0,R)\times
[0,\rho ]}y|V_{\varepsilon }(y)\chi _{R}(|\bar{y}|)\chi _{\rho }(y_{n})|^{p}dy}
{\int_{B_{n-1}(0,R)\times [0,\rho ]}|V_{\varepsilon }(y)
\chi _{R}(|\bar{y}|)\chi _{\rho }(y_{n})|^{p}dy}= \\
&=&\frac{\varepsilon \int_{B_{n-1}(0,R/\varepsilon )\times [ 0,\rho
/\varepsilon ]}z|V(z)\chi _{R}(|\varepsilon \bar{z}|)\chi _{\rho
}(\varepsilon z_{n})|^{p}dz}{\int_{B_{n-1}(0,R/\varepsilon )\times
[0,\rho /\varepsilon ]}|V(z)\chi _{R}(|\varepsilon \bar{z}|)\chi _{\rho
}(\varepsilon z_{n})|^{p}dz}.
\end{eqnarray*}
By the exponential decay of $V$ we get
$|\beta (\Phi _{\varepsilon}(q))-q|<c\varepsilon $, where $c$ is a
constant not depending on $q$.
\end{proof}

\section{The set $T_\varepsilon$}
To finish the proof of Theorem \ref{mainteo}, it remains to show
that there exists a critical point $\bar u $ of $J_\varepsilon$ in 
${\cal N}_\varepsilon$ with 
$m_e^++ \delta < J_\varepsilon (\bar u ) < c_\varepsilon$, 
for bounded constants $c_\varepsilon$. As
explained in Section 3, this is achieved by constructing a set
$T_\varepsilon$ which contains only positive functions, is
contractible in ${\cal N}_\varepsilon\cap
J_\varepsilon^{c_\varepsilon}$ and contains
$\Phi_\varepsilon(\partial M)$.
The process of building the set $T_\varepsilon$ is analogous to 
the process of section 6 of \cite{BBM07}; for clearness we prefer to show it.

To define the set $T_\varepsilon$ we use the functions $Z_{\varepsilon,q}(x)$
as defined in (\ref{zeq}). We recall that $Z_{\varepsilon,q}(x)\in H^1_g(M)$
are positive functions. 
Let $W(x)\in H^1(\R^n_+)$ be any positive function and denote as
usual $W_\varepsilon(x)= W\left(\frac x\varepsilon \right)$. 
For $q_0\in\partial M$
a fixed point on the boundary of $M$
we introduce the functions
\begin{equation}
v_\varepsilon(x) :=
\left\{
\begin{array}{ll}
W_\varepsilon(y(\xi))\tilde\chi(y(\xi)) &\text{if }\xi\in I_{q_0}(R,\rho); \\
 0 & \text{otherwise}
\end{array}
\right.
\end{equation}
where $y(\xi)=\left(\psi^\partial_{q_0}\right)^{-1}$ and
$\tilde\chi(y)=\chi_R(\bar y)\chi_\rho(y_n)$ as in the previous part of the paper.

We define the cone
\begin{equation}
C_\varepsilon:=
\{u(x) := \theta v_\varepsilon(x)+ (1-\theta)Z_{\varepsilon,q}(x)\ : \ \theta\in[0, 1], q \in \partial M.\}
\subset  H^1_g (M).
\end{equation}
By the properties of the map $\Phi_\varepsilon$ proved in Proposition \ref{propphi},
we have that $C_\varepsilon$ is compact and contractible in $H^1_g (M)$. We now
project it on the Nehari manifold ${\cal N}_\varepsilon$ by the factor $t_\varepsilon(u)$
to obtain
\begin{equation}
T_\varepsilon :=\left\{ t_\varepsilon(u)u: u \in C_\varepsilon,
t_\varepsilon^{p-2} (u) =\frac{ |||u|||^2_\varepsilon}{\frac1{\varepsilon^n} |u|^p_{p,g}} \right\}
\subset {\cal N}_\varepsilon .
\end{equation}

We get that $\Phi_\varepsilon(\partial M)\subset T_\varepsilon$, that
$T_\varepsilon$ contains only
positive functions and that it is compact and contractible in ${\cal N}_\varepsilon$.
Hence if we define
\begin{equation*}
c_\varepsilon := \max_{u\in C_\varepsilon} J_\varepsilon( t_\varepsilon(u)u)
\end{equation*}
we get that
$T_\varepsilon \subset {\cal N}_\varepsilon\cap J^{c_\varepsilon}_\varepsilon$.
The last step is to prove the following
proposition.

\begin{prop} There exists a constant $c >0$ such that for $\varepsilon$
small enough it holds $c_\varepsilon < c$.
\end{prop}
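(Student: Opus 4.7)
The plan is to reduce the claim to uniform bounds on two quantities. Writing any $u\in C_\varepsilon$ as $u=\theta v_\varepsilon+(1-\theta)Z_{\varepsilon,q}$ with $\theta\in[0,1]$ and $q\in\partial M$, and using that $t_\varepsilon(u)u\in\mathcal{N}_\varepsilon$, a short algebraic manipulation gives
\[
J_\varepsilon(t_\varepsilon(u)u)=\left(\tfrac{1}{2}-\tfrac{1}{p}\right)\frac{|||u|||_\varepsilon^{p/(p-2)}}{\left(\tfrac{1}{\varepsilon^n}|u|_{p,g}^p\right)^{2/(p-2)}}.
\]
Thus the proof boils down to bounding $|||u|||_\varepsilon$ from above and $\tfrac{1}{\varepsilon^n}|u|_{p,g}^p$ from below, both uniformly in $(\theta,q)$ and in $\varepsilon$ small.

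For the upper bound I would use that $|||\cdot|||_\varepsilon^{1/2}$ is a norm on $H^1_g(M)$ and apply the triangle inequality to conclude
\[
|||u|||_\varepsilon\leq\bigl(\theta|||v_\varepsilon|||_\varepsilon^{1/2}+(1-\theta)|||Z_{\varepsilon,q}|||_\varepsilon^{1/2}\bigr)^2.
\]
By (\ref{eql2}) and (\ref{eqgrad}) in the proof of Proposition \ref{propphi}, $|||Z_{\varepsilon,q}|||_\varepsilon\to \|V\|_{H^1(\R^n_+)}^2$ uniformly in $q\in\partial M$; exactly the same change-of-variables computation, carried out with $W$ in place of $V$ and the fixed point $q_0$ in place of $q$, yields $|||v_\varepsilon|||_\varepsilon\to\|W\|_{H^1(\R^n_+)}^2$. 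Hence $|||u|||_\varepsilon\leq M_1$ uniformly in $(\theta,q)$ for all sufficiently small $\varepsilon$.

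For the lower bound I would exploit that $v_\varepsilon\geq 0$, $Z_{\varepsilon,q}\geq 0$ and $p>1$, so the pointwise superadditivity $(a+b)^p\geq a^p+b^p$ for $a,b\geq 0$ yields
\[
\frac{1}{\varepsilon^n}|u|_{p,g}^p\geq \theta^p\frac{1}{\varepsilon^n}|v_\varepsilon|_{p,g}^p+(1-\theta)^p\frac{1}{\varepsilon^n}|Z_{\varepsilon,q}|_{p,g}^p.
\]
By (\ref{eqlp}) and its $W$-analogue, the right-hand side converges uniformly in $(\theta,q)$ to $F(\theta):=\theta^p\|W\|_{L^p(\R^n_+)}^p+(1-\theta)^p\|V\|_{L^p(\R^n_+)}^p$. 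Because $p>2$, $F$ is strictly convex on $[0,1]$ with strictly positive endpoint values, so $m_0:=\min_{[0,1]}F>0$; therefore the denominator is at least $m_0/2$ for $\varepsilon$ small. Combining the two bounds produces $J_\varepsilon(t_\varepsilon(u)u)\leq (\tfrac{1}{2}-\tfrac{1}{p})M_1^{p/(p-2)}(m_0/2)^{-2/(p-2)}=:c$ independently of $\varepsilon$.

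The delicate step is precisely the lower bound on the $L^p$-norm: when $\theta$ lies strictly inside $(0,1)$ and $q$ is far from $q_0$ the two bumps $v_\varepsilon$ and $Z_{\varepsilon,q}$ are centered at different points of $\partial M$ and their $\varepsilon$-mass is essentially disjoint, so a naive use of subadditivity would give nothing useful. The pointwise inequality $(a+b)^p\geq a^p+b^p$ together with the strict convexity of $F$ (which ultimately rests on $p>2$ and on $V,W$ being nontrivial) is exactly what forces a uniform positive lower bound, and hence the uniform upper bound on $c_\varepsilon$.
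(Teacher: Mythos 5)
Your proof is correct and follows essentially the same route as the paper: the identity for $J_\varepsilon(t_\varepsilon(u)u)$, a uniform upper bound on $|||u|||_\varepsilon$ from the limits computed in Proposition \ref{propphi} (and their $W$-analogues), and a uniform positive lower bound on $\frac{1}{\varepsilon^n}|u|_{p,g}^p$ resting on the positivity of $v_\varepsilon$ and $Z_{\varepsilon,q}$ — the paper uses the pointwise bound $u\geq\max\{\theta v_\varepsilon,(1-\theta)Z_{\varepsilon,q}\}$ where you use superadditivity of $t\mapsto t^p$, an immaterial difference. One small slip: strict convexity of $F$ plus positive endpoint values does not by itself give $\min F>0$ (a strictly convex function can vanish in the interior); what you actually need, and have, is that $F(\theta)=\theta^p\|W\|_p^p+(1-\theta)^p\|V\|_p^p$ is continuous and pointwise positive on the compact interval $[0,1]$, since $\theta$ and $1-\theta$ cannot both vanish.
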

\begin{proof}
By the definition of the
Nehari manifold, we recall that for $u\in C_\varepsilon$ it holds

\begin{equation}\label{65}
J_\varepsilon (t_\varepsilon(u)u)
=\left( \frac12 - \frac1p\right)
t^2_\varepsilon(u)|||u|||^2_\varepsilon=
\left( \frac12 - \frac1p\right)
\frac{|||u|||_\varepsilon^\frac{2p}{p-2}}
{\left( \frac{1} {\varepsilon^n} |u|^p_{p,g}\right)^\frac2{p-2}}.
\end{equation}
Arguing as (\ref{eql2}), (\ref{eqlp}), (\ref{eqgrad})
for $v_\varepsilon$ and $W$ instead of $Z_{\varepsilon,q}$ and $V$,
we find that there exists a constant $k_1 > 0$ such that

\begin{equation}\label{66}
|||u|||^2_\varepsilon \leq   ||W||^2_{H^1}+||V||^2_{H^1}+k_1
\end{equation}
for $\varepsilon$ small enough. Moreover, 
for $\varepsilon$ small enough, we find
constants $k_2 > 0$ and $k_3 > 0$ such that
\begin{eqnarray*}
\frac1{\varepsilon^n} |v_\varepsilon |^p_{p,g}\geq |W|_p^p -k_2> 0, &&
\frac1{\varepsilon^n} |Z_{\varepsilon,q} |^p_{p,g}\geq |V|_p^p -k_3> 0.
\end{eqnarray*}
Hence, since $v_\varepsilon$ and $Z_{\varepsilon,q}$ are
positive functions and $\theta\in [0, 1]$, there exists $k_4$ such that
\begin{equation}
\frac{1}{\varepsilon^n} |u|^p_{p,g}\geq
\frac{1}{\varepsilon^n} 
\max\{ |\theta v_\varepsilon |^p_{p,g}, |(1-\theta)Z_{\varepsilon,q} |^p_{p,g}\}
\geq k_4\label{67}
\end{equation}
 for $\varepsilon$ small enough. Putting together (\ref{65}), (\ref{66}) and
(\ref{67}) we get the thesis.
\end{proof}

\end{document}